\documentclass{amsart}
\usepackage{amssymb, amsmath, latexsym, a4,harmony}
\usepackage[latin1]{inputenc}
\usepackage[all]{xy}

\def\xto#1{\xrightarrow[]{#1}}

\def\q{{\frak q}}

\def\p{{\frak p}}

\def \ker{\mathop{\sf Ker}\nolimits}

\numberwithin{equation}{section}

\def\1{^{-1}}
\newtheorem{De}{Definition}[section]
\newtheorem{Th}[De]{Theorem}
\newtheorem{Pro}[De]{Proposition}
\newtheorem{Le}[De]{Lemma}
\newtheorem{Co}[De]{Corollary}

\newtheorem{Rem}[De]{Remark}

\newtheorem{Ex}[De]{Example}

\newcommand{\gen}[1]{\left\langle #1 \right\rangle}

%replace f and h at start
\begin{document}

\title{On cohomology and vector bundles over monoid schemes}

\author[I. Pirashvili]{Ilia  Pirashvili}
\address{
Department of Mathematics\\
University of Leicester\\
University Road\\
Leicester\\
LE1 7RH, UK} \email{ilia\_p@ymail.com, ip96@le.ac.uk}

\maketitle

\section{Introduction}\label{sec0}

The aim of this paper is to study the cohomology theory of monoid schemes in general and apply it to vector and line bundles. We will prove that any vector bundle over any separated monoid scheme is a coproduct of line bundles. These facts generalise the main results of \cite{bv}. It should be said that our methods are different from the ones used in \cite{bv}. We use cohomological machinery, which could have other applications beyond monoid schemes. 

Having obtained this result, we will study the line bundles in more detail. As in the classical case, the isomorphism classes of such bundles form an abelain group, which gives rise to a contravariant functor ${\sf Pic}$ from the category of monoid schemes to the category of abelian groups.  We will prove that  ${\sf Pic}$  respects finite products, in other words ${\sf Pic}(X\times Y)={\sf Pic}(X)\times{\sf Pic}(Y)$ for separated monoid schemes $X,Y$. 

Next  we  introduce the notion of $s$-cancellative monoids. They are monoids for which the equality $ax=ay$ implies that $(xy)^nx=(xy)^ny$ for some natural number $n$. By taking $n=0$, we see that every cancellative monoid is $s$-cancellative. A monoid scheme is $s$-cancellative if it is obtained by gluing $s$-cancellative monoids. This class is important since it is the biggest class of monoids where $M^*_{\p}\rightarrow M^*_{\q}$ is injective for all inclusions of prime ideals $\q\subset\p$. As we will see in section \ref{sec5}, this will enable us to embed $\mathcal{O}^*_X$ injectivley in a constant sheaf when $X$ is $s$-cancellative. We develop the theory of $s$-divisors and we prove that for an $s$-cancellative monoid scheme $X$, the group ${\sf Pic}(X)$ can be described in terms of $s$-divisors. For cancellative monoid schemes, $s$-divisors agree with the Cartier divisors. 

Furthermore we investigate the relationship between line bundles over a monoid scheme $X$ and line bundles over its geometric realisation $X_k$, where $k$ is a commutative ring. We prove that if $k$ is an integral domain (resp. principal ideal domain) and $X$ is a cancellative and torsion free (resp.  seminormal and torsion-free) monoid scheme, then the induced map ${\sf Pic}(X)\to {\sf Pic}(X_k)$ is a monomorphism (resp. isomorphism).  These are generalisations of some of the results in \cite{bv} as well. Line bundles are classified by $H^1(X,\mathcal{O}^*_X)$. The next question that arises is what about the higher cohomologies $H^i(X,\mathcal{O}^*_X)$, $i\geq 2$? We introduce a class of monoid schemes, called $s$-smooth monoid schemes, which includes the class of all smooth monoid schemes and we will prove that for them $H^i(X,\mathcal{O}^*_X)=0$ for all $i\geq 2$. 

The paper is organized as follows: Section \ref{sec1} is a collection of some useful facts on abelian and nonabelian sheaf cohomology which we will use later on. 
In Section \ref{sec2}, using a cohomological argument, we will prove that over separated monoid schemes any vector bundle is a coproduct of line bundles. In Section \ref{sec3} we prove the  additivity property of the functor ${\sf Pic}$. In the next section, we translate the classical theory of Cartier divisors for monoid schemes and we show that for cancellative monoid schemes line bundles, up to isomorphisms, can be described using Cartier divisors. In Section \ref{sec5} we generalise the results of the above section and prove that for $s$-cancellative monoid schemes line bundles can be classified using $s$-divisors. In Section \ref{sec6} we define and study $s$-smooth monoids and monoid schemes and we will prove that the cohomology $H^i(X,\mathcal{O}^*_X)$ vanishes for $s$-smooth $X$ and $i\geq 2$. Furthermore we show that smooth monoid schemes are $s$-smooth and that it is a local property that respects finite products. We then give several examples of $s$-smooth monoids which are not smooth. The final section considers the relationship between line bundles over a monoid schemes and its realizations. This section can be read just after Section \ref{sec2}.

This paper was written during my PhD study at the University of Leicester. I am grateful to Dr. Frank Neumann for the supervision and guidance.

\section{Cohomology preliminaries}\label{sec1}
This section starts with a collection of some useful facts on abelian and nonabelian sheaf cohomology on general spaces. The underlying topological spaces of monoid schemes of finite type are finite posets and in Section \ref{sec1.3} we clarify the relationship between sheaf cohomology and poset cohomology. As an application we will prove that for affine monoid schemes, sheaf cohomology vanishes in positive dimensions. By Corollary \ref{serre_m} this property characterises affine monoid schemes. Of course this result is influenced by the classical theorem of Serre.

\subsection{Cohomology with coefficients in  a semi-direct product}\label{sec1.1}
 Recall that if $X$ is a topological space and $F$ is a presheaf of groups on $X$ then for any open cover ${\mathcal U}=
(U_i\hookrightarrow X)_{i\in I}, \ \bigcup _iU_i=X$, the zeroth cohomology group $H^0({\mathcal U},F)$ and the first cohomology pointed set $H^1({\mathcal U},F)$ are defined. If $F$ has values in abelian groups, then there are well-defined cohomology groups $H^n({\mathcal U},F)$, $n\geq 0$ and these groups are abelian. If ${\mathcal F}$ is a sheaf, we also have the sheaf cohomology $H^n(X,{\mathcal F})$. Here $n=0,1$ if $F$ is a sheaf of nonabelian groups, while $n\geq 0$ if $F$ is a sheaf of abelian groups. It is well-known that $H^i(X,{\mathcal F})=colim_{\mathcal U}H^i({\mathcal U},{\mathcal F})=\check{H}^i(X,\mathcal F)$, $i=0,1$, where the colimit is taken with respect to all open covers.
 
 Recall also that, for any short exact sequence of sheaves of groups $$1\to F_1\xto{\alpha} F\xto{\beta} F_2\to 1$$
there is an associated exact sequence
$$1\to F_1(X)\to F(X)\to F_2(X)\xto{\delta} H^1(X,F_1)\xto{\alpha_*}H^1(X,F)\xto{\beta_*} H^1(X,F_2).$$
Unfortunately this is only an exact sequence of pointed sets, but in fact more is true. Namely,  there is an action of $F_2(X)$ on $H^1(X,F_1)$ which controls the fibres of the map $\alpha_*$ (see  \cite[Proposition 5.3.1]{kansas}). From this one immediately obtains the following.
\begin{Le}\label{sdc} Assume 
$$1\to F_1\xto{\alpha} F\xto{\beta} F_2\to 1$$
is a split short exact sequence of sheaves of groups. Then one has a short exact sequence
$$1\to H^1(X,F_1)_{F_2(X)}\xto{\alpha_*} H^1(X,F)\to H^1(X,F_2)\to 1,$$
where $H^1(X,F_1)_{F_2(X)}$ is the orbit space of $H^1(X,F_1)$ under the action of the group $F_2(X).$
\end{Le}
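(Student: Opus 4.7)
The plan is to read off the stated short exact sequence from the six-term exact sequence of pointed sets recalled immediately above, combined with the description of the fibres of $\alpha_*$ as $F_2(X)$-orbits, which is the content of the cited \cite[Proposition 5.3.1]{kansas}.

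First I would establish surjectivity of $\beta_*\colon H^1(X,F)\to H^1(X,F_2)$. A splitting $s\colon F_2\to F$ of $\beta$ is a morphism of sheaves of groups with $\beta\circ s=\id_{F_2}$, so by functoriality of $H^1(X,-)$ it induces a map $s_*\colon H^1(X,F_2)\to H^1(X,F)$ satisfying $\beta_*\circ s_*=\id$. Hence $\beta_*$ is a split epimorphism of pointed sets.

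Second, I would pass $\alpha_*$ to the orbit space. Since the $F_2(X)$-action on $H^1(X,F_1)$ controls the fibres of $\alpha_*$, classes in the same orbit are identified by $\alpha_*$, so $\alpha_*$ factors through the quotient $H^1(X,F_1)_{F_2(X)}$; and the fact that two classes have the same image under $\alpha_*$ precisely when they lie in the same orbit gives injectivity of the induced map on the orbit space. Exactness in the middle then comes for free from the original six-term sequence: the image of the descended map equals the image of $\alpha_*$, which is the preimage of the basepoint of $H^1(X,F_2)$ under $\beta_*$.

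The main delicate point is bookkeeping rather than genuine difficulty: one must interpret the conclusion in the pointed-set sense, namely that the leftmost arrow is injective, its image equals the preimage of the basepoint under the rightmost arrow, and the rightmost arrow is surjective. A secondary subtlety worth flagging is that in the split situation the $F_2(X)$-action of \cite[Proposition 5.3.1]{kansas} is simply conjugation by $s$ inside the semi-direct product $F\cong F_1\rtimes F_2$, so the orbit space $H^1(X,F_1)_{F_2(X)}$ is a genuine set-theoretic quotient and no twisting is required; this is exactly the role played by the splitting hypothesis.
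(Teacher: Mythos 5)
Your argument is correct and is essentially the paper's own: the paper deduces the lemma ``immediately'' from the six-term exact sequence together with the cited fact that the $F_2(X)$-action controls the fibres of $\alpha_*$, and you have merely made explicit the two points it leaves implicit, namely that the splitting $s$ gives $\beta_*\circ s_*=\mathrm{id}$ (hence surjectivity of $\beta_*$) and that the orbit description of the fibres gives injectivity of the induced map on $H^1(X,F_1)_{F_2(X)}$.
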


\subsection{Sheaf cohomology and monoid schemes} In what follows all monoids are commutative and written multiplicatively. Let $M$ be a  commutative monoid. By $Spec(M)$ one denotes the set of all prime ideals of $M$ \cite{deitmar}, \cite{spec}. For any element $f\in M$ one puts $$D(f)=\{\frak{p}\in Spec(M) | f\notin \frak{p}\}.$$ The sets $D(f)$ form a  basis of opens sets for the standard topology on $Spec(M)$. Moreover, the assignment $D(f)\mapsto M_f$ defines  a sheaf ${\mathcal O}_M$ of monoids on  $Spec(M)$. Here $M_f$ is the localisation of $M$ with respect to a submonoid generated by $f$. A \emph{monoid scheme} is a pair $(X, \mathcal{O}_X)$, where $X$ is a topological space and $\mathcal{O}_X$ is a sheaf of monoids on $X$ such that locally it is isomorphic to the pair
$(Spec(M),\mathcal{O}_M)$ for a monoid $M$. Of course the isomorphisms must be local, as they are in the case of the usual schemes \cite{deitmar}, \cite{cort}. For simplicity instead of $(X,\mathcal{O}_X)$ we will write $X$. It is well-known that the category ${\bf Mschemes}$ of monoid schemes has finite products \cite{deitmar},\cite{cort}. As in the classical case,  based on the product,  one can talk about \emph{separated} monoid schemes \cite{cort}. 

\begin{Pro} \label{va} Let $X$ be a  monoid scheme and $F$ be a sheaf of groups.

i) If $X$ is affine, then
$$H^1(X,F)=0.$$
Moreover, if $F$ is a sheaf of abelian groups, then $H^i(X,F)=0$ for all $i\geq 1$.

ii) Let $X$ be a separated monoid scheme and $\mathcal{U}$ be a cover of $X$ with open affine monoid subschemes.  
If $F$ is a sheaf of abelian groups, then $H^*(X,F)=H^*(\mathcal{U},F)$. 
\end{Pro}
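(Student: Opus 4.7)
The plan is to exploit the fact that an affine $X=Spec(M)$ has a unique closed point, namely the maximal ideal $\mathfrak{m}=M\setminus M^*$ consisting of all non-units. I would first check directly from the basis $D(f)$ that $\mathfrak{m}\in D(f)$ forces $f\in M^*$, and hence $D(f)=X$; taking unions, the only open set of $X$ containing $\mathfrak{m}$ is $X$ itself. Consequently every open cover $\mathcal{V}$ of $X$ admits the singleton cover $\{X\}$ as a refinement, because some member of $\mathcal{V}$ must contain $\mathfrak{m}$ and therefore already equals $X$. The \v{C}ech complex of $\{X\}$ is concentrated in degree zero, so $H^i(\{X\},F)=0$ for $i\geq 1$; passing to the colimit over refinements yields $\check H^1(X,F)=0$, which coincides with $H^1(X,F)$ by the identification recalled in Section~\ref{sec1.1}.

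For the abelian statement in arbitrary positive degree, I would show that the global-sections functor is exact on $X=Spec(M)$. Because the only open neighbourhood of $\mathfrak{m}$ is $X$, the stalk formula collapses to $F(X)=F_\mathfrak{m}$; since the stalk functor is exact on sheaves of abelian groups, so is $F\mapsto F(X)$, and hence its right-derived functors $H^i(X,-)$ vanish for every $i\geq 1$.

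For part (ii), I would feed the affine cover $\mathcal{U}$ into the \v{C}ech-to-derived-functor (Leray) spectral sequence $E_2^{p,q}=\check H^p(\mathcal{U},\mathcal{H}^q(F))\Rightarrow H^{p+q}(X,F)$, where $\mathcal{H}^q(F)$ denotes the presheaf $U\mapsto H^q(U,F)$. As in the classical case, separatedness of $X$ forces the intersection of any two affine open subschemes to be affine, so every finite intersection $U_{i_0}\cap\cdots\cap U_{i_n}$ is again affine; by part (i), $\mathcal{H}^q(F)$ vanishes on such intersections for $q\geq 1$. The spectral sequence then collapses to the $q=0$ row and yields $H^p(X,F)=H^p(\mathcal{U},F)$ for every $p$.

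The main point to check carefully, and the one non-cohomological ingredient of the argument, is the monoid-scheme analogue of the classical statement that intersections of affine opens in a separated scheme are affine. This should follow from the usual reasoning, identifying $U\cap V$ with the pullback of $U\times V\subset X\times X$ along the closed diagonal $X\to X\times X$ and invoking the existence of products in ${\bf Mschemes}$. Everything else is a routine transcription of standard sheaf-cohomological machinery to the monoid-scheme setting.
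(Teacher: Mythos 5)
Your proposal is correct and follows essentially the same route as the paper: part (i) rests on the observation that the maximal ideal $\mathfrak{m}$ of non-units has $X$ as its only open neighbourhood, so that $F(X)=F_{\mathfrak{m}}$ (giving exactness of global sections in the abelian case) and the singleton cover $\{X\}$ is cofinal (giving the nonabelian $H^1$ statement); part (ii) is Leray's theorem applied to an affine cover, using that separatedness makes intersections of affine opens affine. The paper simply cites Corti\~nas--Haesemeyer--Walker--Weibel for that last fact rather than re-deriving it from the diagonal, but this is the only (cosmetic) difference.
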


\begin{proof} i) Let $M$ be a monoid, $X=Spec(M)$.  Let $\frak{m}$ be the subset of non-invertible elements of $M$. 
If $\frak{m}\in D(f)$ for an element $f\in M$, then $f$ is invertible and thus $D(f)=X$. Hence the only open subset of $X$ which contains $\frak{m}$ is $X$. Thus for any sheaf $F$ one has
$$F(X)=F_{\frak{m}}.$$
Since $F\mapsto F_\frak{m}$ is an exact functor on the category of sheaves, we see that the global section functor is an exact functor. This already proves the result for abelian sheaves, because $H^*(X,-)$, being derived functors of an exact functor, vanishes in positive dimensions.   If $F$ is not necessary  abelian, the proof is essentially the same: Any open cover of $X$ must contain $X$ as a member. Thus a cover consisting of a single element $X$ is cofinal among all covers. Since the cosimplicial object corresponding to this cover is constant, the result follows.

ii) Since $X$ is separated, the intersection of any two open affine monoid subschemes is again affine (see  \cite[Corollary 3.8]{cort}). Hence ii) is a formal consequence of i) and Leray's theorem (see for example  \cite[Corollary of Theorem II.5.4.1]{G}).

 \end{proof}
 
\subsection{Sheaf cohomology for finite $T_0$-spaces}\label{sec1.3} Recall also the well-known relationship between finite $T_0$-spaces and finite posets. Let $X$ be a $T_0$-space, we write $x\leq y$, $x,y\in X$, provided $\overline{y}\subset \overline{x}$. Here $\overline{x}$ denotes the closure of the point $x$. Then $\leq $ is a partial order on $X$, called the underlying poset of the topological space $X$.  Let $X$ be a $T_0$-space. Then for any sheaf $F$ of sets on $X$ the map $x\mapsto F_x$ gives rise to a contravariant functor on the poset $X$.

Observe that if $f:X\to Y$ is a continuous 
function of $T_0$-spaces then $f$ is monotonic. Thus we have a forgetful functor from the category 
of $T_0$-topological spaces to the category of posets. 

A $T_0$-topological space is called  a \emph{$\mathfrak{P}$-topological space}, provided any intersection of open sets is open. The following is well-known.

\begin{Pro}\label{sh=fun} i) The above functor restricted to  $\mathfrak{P}$-topological spaces induces an equivalence between the category of $\mathfrak{P}$-topological spaces and posets. 

ii) Let $X$ be a $\mathfrak{P}$-topological space and $F$ a sheaf of sets on $X$. Then the map $F\mapsto F_x$   gives rise to an equivalence of the category of sheaves on $X$ and the category of contravariant functors on the poset $X$. 

iii) If $F$ is a sheaf of groups, then $H^i(X,F)$ is isomorphic to the $i$-th cohomotopy of 
the cosimplicial object $C^n(X,F)$, where
$$C^n(X,F)=\prod_{x_0\leq \cdots \leq x_n}F_{x_n}.$$
\end{Pro}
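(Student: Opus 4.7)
For part (i), my plan is to construct an explicit inverse functor. Given a poset $(P,\leq)$, equip the underlying set with the topology whose open sets are the down-sets (subsets $U$ such that $y\in U$ and $x\leq y$ imply $x\in U$). Arbitrary intersections of down-sets are down-sets, so this defines a $\mathfrak{P}$-topology, and a direct check shows that the induced specialization relation ``$\overline{y}\subset\overline{x}$'' recovers the original $\leq$ (the closure of $\{x\}$ equals its up-set). Continuity between two such spaces is equivalent to monotonicity, so the two constructions are inverse on morphisms as well.

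For part (ii), the key observation is that in a $\mathfrak{P}$-space the intersection $U_x=\bigcap_{U\ni x}U$ of all open neighbourhoods of a point is itself open and equals $\{y:y\leq x\}$; hence $U_x$ is the smallest open neighbourhood of $x$, and $F_x=F(U_x)$. Since $U_y\subset U_x$ when $y\leq x$, the restriction morphisms make $x\mapsto F_x$ into a contravariant functor. Conversely, given such a functor $G$, define a presheaf by $F(U)=\lim_{x\in U}G(x)$; the sheaf axiom is immediate because $\{U_y\}_{y\in U}$ refines every open cover of $U$, and one checks that these two constructions are mutually inverse up to natural isomorphism.

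For part (iii), the equivalence of (ii) identifies the global sections functor on sheaves with the limit functor on the corresponding contravariant functors, so $H^i(X,F)$ coincides with the $i$-th right derived functor of this limit applied to $x\mapsto F_x$. These derived limits are computed by the standard cosimplicial (cobar) resolution for limits over small categories: for a functor on a small category $\mathcal{C}$, the cosimplicial abelian group whose $n$-th term is indexed by composable chains of $n$ morphisms in $\mathcal{C}$ and carries the stalk at the appropriate endpoint has cohomotopy equal to these derived functors. Specialising to the poset $X$ viewed as a category, composable chains become chains $x_0\leq\cdots\leq x_n$, and this cosimplicial object coincides with $C^\bullet(X,F)$. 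Alternatively, one can proceed more concretely via \v{C}ech cohomology with respect to the cover $\mathcal{U}=\{U_x\}_{x\in X}$: each non-empty finite intersection $U_{x_0}\cap\cdots\cap U_{x_n}$ is again the smallest open neighbourhood of a common lower bound, so the argument of Proposition \ref{va}(i) (the existence of a smallest open makes the global sections functor exact on that subspace) shows that $\mathcal{U}$ is an acyclic cover; Leray's theorem then gives $\check{H}^*(\mathcal{U},F)=H^*(X,F)$, and a direct combinatorial identification reduces the \v{C}ech complex to $C^\bullet(X,F)$.

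The main obstacle is the last step of part (iii): the careful matching of the cobar (or \v{C}ech) differentials with the coface maps of $C^\bullet$, in particular aligning the conventions for contravariant functoriality, the placement of the stalk $F_{x_n}$, and the orientation of chains so that the formula in the statement is recovered. Parts (i) and (ii) are essentially formal once the key observation about smallest open neighbourhoods is in hand.
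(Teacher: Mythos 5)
Your argument is correct and follows essentially the same route as the paper: the down-set topology as the inverse functor in (i), the smallest-neighbourhood observation and the formula $F(U)=\lim_{x\in U}F_x$ in (ii), and the identification of sheaf cohomology with derived inverse limits computed by the standard cosimplicial resolution in (iii). Two caveats are worth flagging. First, the statement of (iii) allows $F$ to be a sheaf of possibly nonabelian groups, for which ``derived functors of $\lim$'' is not available; the paper disposes of the case $i=1$ separately by noting that both $H^1(X,F)$ and the first cohomotopy of $C^\bullet(X,F)$ classify $F$-torsors, and your write-up should include some such argument rather than relying solely on the abelian derived-functor identification. Second, your alternative \v{C}ech route contains a claim that is false for a general poset: a non-empty intersection $U_{x_0}\cap\cdots\cap U_{x_n}$ is the set of common lower bounds of $x_0,\dots,x_n$, which need not have a greatest element, hence need not be of the form $U_z$; so the cover $\{U_x\}$ is not obviously acyclic without a meet-semilattice hypothesis (which the paper only has for separated monoid schemes, not for arbitrary $\mathfrak{P}$-spaces). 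Since you present this only as an alternative and your primary cobar argument does not depend on it, the proof stands, but the alternative should either be dropped or restricted to posets with finite meets.
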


\begin{proof} i) The converse functor is constructed as follows. If $X$ is a poset, we equip $X$ with the topology, where we declare a subset $U$ open provided for any $x\leq y, y\in U$ one has $x\in U$.

ii)  Assume a contravariant functor $M:X\to {\sf Sets}$ is given.  It suffices to say that the corresponding  sheaf $F$ on $X$ is defined by $F(U)=lim_{x\in U}M_x$. 

iii) For abelian sheaves, both $H^*(X,F)$ and $H^*(C^*(X,F))$ are left derived functors of the global section functors and inverse limit functors. Since the above equivalence of categories, these functors are compatible, and hence the result follows. If $F$ is a group valued functor, both $H^1(X,F)$ and $H^1(C^*(X,F))$ are in a one-to-one correspondence with $F$-torsors and hence the result follows.
\end{proof}

A  monoid scheme $X$ is called of \emph{finite type}, provided there is a finite  open cover by affine schemes $U_i=Spec(M_i)$, such that all $M_i$'s are finitely generated monoids. Clearly the underlying topological space of $X$ is a finite $T_0$-space. Hence Proposition \ref{sh=fun} can be applied to such monoid schemes. 

Thus monoids schemes of finite types can and will be described using pairs $(X,\mathcal{O}_X)$, where $X$ is a finite poset and $\mathcal{O}_X$ is  a contravariant functor on it. For example, $Spec(\mathbb{Z}), Spec(\mathbb{N}), Spec(\mathbb{N}^2)$ have the forms
$$\xymatrix{&& &\mathbb{N}^2\ar[dl]\ar[dr] & \\ 
& \mathbb{N}\ar[d]&\mathbb{N}\times \mathbb{Z}\ar[dr]& &\mathbb{Z}\times \mathbb{N}\ar[dl]\\ 
\mathbb{Z},& \mathbb{Z},& & \mathbb{Z}^2. &}
$$
The posets arising from monoid schemes  have additional properties. To state them, we fix a notation. For a poset $P$ and an element $p\in P$ one sets $L(P,p)=\{x\in P| x\leq p\}$. If $X$ is a monoid scheme, then for any element $x\in X$, the subposet $L(X,x)$ is isomorphic to the poset of the spectrum of a monoid (see \cite{cort}). Thus if $X$ is of finite type, then $L(X,x)$ is a lattice with greatest and smallest element \cite{spec}. It follows that if $X$ is a separated scheme of finite type, then $L(X,x)\cap L(X,y)$ is either an empty set or again of  the type $L(X,z)$, where $z=x\wedge y$. Thus if $X$ is a connected separated monoid scheme of finite type, then $X$ is a meet semi-lattice with smallest element. In particular the nerve of such a poset is contractible.

 \begin{Le}\label{covdim} Let $X$ be a separated monoid scheme of finite type. 
 
 i) Then for any constant sheaf $G$ with values in groups, one has
$$H^1(X,G)=0.$$
If $G$ is also abelian, then $H^n(X,G)=0$ for all $n\geq 1$.

ii) Assume $X$ is connected and $m_1,\cdots, m_k$ are maximal elements of $X$. Then for any  sheaf  $F$ on $X$  the cohomology $H^*(X,F)$ can be computed using the cochain complex
$$\prod_{i=1}^kF_{m_i} \to \prod_{i,j}F_{m_i\wedge m_j}\to \prod_{i,j,k}F_{m_i\wedge m_j\wedge m_k}\to\cdots \to F_{m_1\wedge\cdots \wedge m_k}\to 0\to \cdots.$$
In particular $H^n(X,F)=0$, for all $n>k$, where $n$ is the minimal natural number such that $X$ has an open cover by affine monoid schemes.
 \end{Le}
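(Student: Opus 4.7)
Both parts reduce to the machinery already set up in Section \ref{sec1}. For (i), the paragraph immediately preceding the lemma records that a connected separated monoid scheme $X$ of finite type is a meet semi-lattice with smallest element, so its nerve $N(X)$ has a cone point and is contractible. By Proposition \ref{sh=fun}(iii), the cohomology $H^*(X,G)$ is computed by the cosimplicial object $C^n(X,G)=\prod_{x_0\leq\cdots\leq x_n}G$, which for a constant sheaf $G$ is the standard cochain complex of $N(X)$ with coefficients in $G$. Contractibility of $N(X)$ therefore gives vanishing in positive degrees in the abelian case, and triviality of the pointed set $H^1(X,G)$ in the nonabelian case: $G$-torsors over a space with simply connected classifying space are classified by $\hom(\pi_1(N(X)),G)/G$, a single point. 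The possibly disconnected case reduces to its connected components, since sheaf cohomology splits over disjoint unions and the relevant hypotheses are inherited componentwise.

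For (ii), the proof is a direct \v{C}ech computation. For each maximal element $m_i$, let $U_i:=L(X,m_i)=\{x\in X\mid x\leq m_i\}$ be the smallest open neighbourhood of $m_i$. Each $L(X,x)$ is the poset of the spectrum of a monoid, so each $U_i$ is an open affine monoid subscheme; as every element of the finite poset $X$ lies below some maximal element, $\mathcal{U}=\{U_i\}_{i=1}^k$ is an open affine cover of $X$. Separatedness, as recalled just before the lemma, ensures that any nonempty intersection takes the form $U_{i_0}\cap\cdots\cap U_{i_p}=L(X,m_{i_0}\wedge\cdots\wedge m_{i_p})$, a poset with greatest element $m_{i_0}\wedge\cdots\wedge m_{i_p}$. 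Proposition \ref{sh=fun}(ii) therefore gives
$$F(U_{i_0}\cap\cdots\cap U_{i_p})=F_{m_{i_0}\wedge\cdots\wedge m_{i_p}},$$
so the \v{C}ech complex of $\mathcal{U}$ with coefficients in $F$ is exactly the cochain complex in the statement. Proposition \ref{va}(ii) identifies its cohomology with $H^*(X,F)$. The final vanishing assertion is then immediate, since the \v{C}ech complex of a cover with $k$ members has no terms indexed by more than $k$ distinct members.

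The argument is essentially bookkeeping: the two paragraphs preceding the lemma (contractibility of the nerve; closure of the family $\{L(X,x)\}$ under intersection thanks to separatedness) do all the real work. The only mildly delicate point is the nonabelian $H^1$ in (i), which must be handled through the torsor/classifying space picture rather than by cochains, since nonabelian cohomotopy in higher degrees is not classically defined; I do not foresee a genuine obstacle.
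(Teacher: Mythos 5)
Your proposal is correct and follows essentially the same route as the paper: part (i) via contractibility of the nerve of the underlying poset (the paper leaves the deduction implicit, while you spell out the cosimplicial/torsor details), and part (ii) via the \v{C}ech complex of the affine cover $U_i=L(X,m_i)$ together with Proposition \ref{va}(ii) and the identification $F(U_{i_0}\cap\cdots\cap U_{i_p})=F_{m_{i_0}\wedge\cdots\wedge m_{i_p}}$ coming from separatedness. No gaps.
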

 \begin{proof} 
i) We can assume that $X$ is connected. In this case the nerve of $X$ is contractible, as stated just above the lemma, and hence the result follows.

ii) This follows from part ii) of the Lemma \ref{va}, applied to the open covering $(U_i)_{i=1}^k$, where $U_i=L(X,m_i)$. In fact, since $m_{j_1}\wedge\cdots \wedge m_{j_k}$ is the greatest element in $U_{i_{j_1}}\cap \cdots \cap U_{i_{j_k}}$, because $X$ is separable, one has 
$F(U_{i_{j_1}}\cap \cdots \cap U_{i_{j_k}})=F_{m_{j_1}\wedge\cdots \wedge m_{j_k}}.$
 \end{proof}
 
 Let us recall that for a point $x\in X$, the hight of $x$ is the supremum of such natural numbers $k$, for which there is a sequence $x_0<\cdots <x_k=x$. The Krull dimension $dim(X)$ is defined as $$dim(X)={\sf sup}_{x\in X} ht(x).$$ 
 Hawing in mind the last assertion of Lemma \ref{covdim}, we also have $H^i(X,F)=0$, for all $k>dim(X)$.  This is a consequence of the well-known result of Grothendieck on Noetherian spaces \cite{G}.

\begin{Co}\label{serre_m} Assume $X$ is a non-affine monoid scheme. Then
there exists a sheaf $F$ of abelian groups such that $H^1(X,F)$ is non-trivial.
\end{Co}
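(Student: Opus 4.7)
The plan is to exhibit, for $X$ non-affine, an explicit short exact sequence of abelian sheaves whose associated long exact sequence forces $H^1$ to be nontrivial. I would work within the implicit context of Section \ref{sec1.3}, namely connected separated monoid schemes of finite type; in this setting, non-affineness of $X$ is equivalent to the poset of $X$ having no greatest element, for if $p$ were a maximum then every point would satisfy $x\leq p$ and $X = L(X,p) = Spec(\mathcal O_{X,p})$ would be affine. Thus I may pick two distinct maximal elements $m_1\neq m_2$.

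Using the equivalence of Proposition \ref{sh=fun} ii), I would define an abelian sheaf $F$ as the contravariant poset functor assigning $\mathbb Z$ to $m_1$ and $m_2$, $0$ to every other point, and making every non-trivial restriction map zero (the only option, since the target is $0$); geometrically $F$ is a direct sum of two skyscrapers. The stalk-wise identity at $m_1, m_2$ together with zero elsewhere then yields a surjection $\phi\colon\mathbb Z_X\twoheadrightarrow F$; set $K=\ker\phi$. The cohomological long exact sequence of $0\to K\to \mathbb Z_X\xrightarrow{\phi}F\to 0$ reads
$$\Gamma(X,\mathbb Z_X)\xrightarrow{\Gamma(\phi)}\Gamma(X,F)\to H^1(X,K)\to H^1(X,\mathbb Z_X).$$
Since $X$ is connected, $\Gamma(X,\mathbb Z_X)=\mathbb Z$. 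The global sections of $F$ are freely parametrised by the values at $m_1,m_2$ (compatibility with the zero restrictions to stalks below $m_i$ is automatic), so $\Gamma(X,F)=\mathbb Z\oplus\mathbb Z$ and $\Gamma(\phi)$ is the diagonal $a\mapsto(a,a)$. By Lemma \ref{covdim} i), $H^1(X,\mathbb Z_X)=0$, and the sequence therefore gives $H^1(X,K)\cong \cok\bigl(\mathbb Z\to\mathbb Z^2\bigr)\cong\mathbb Z\neq 0$.

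The only real bookkeeping is the verification, within the Alexandrov framework of Proposition \ref{sh=fun}, that $F$ is a well-defined sheaf and $\phi$ a morphism of sheaves; both are automatic because every naturality square involves a zero side, hence commutes trivially. Connectedness is used essentially both for $\Gamma(\mathbb Z_X)=\mathbb Z$ and for the vanishing of $H^1(X,\mathbb Z_X)$ from Lemma \ref{covdim} i); in the disconnected case the corollary would need a separate argument.
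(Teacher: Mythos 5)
Your argument is correct, and it reaches the conclusion by a genuinely different route from the paper. The paper also starts from the observation that a non-affine $X$ (implicitly: connected, separated, of finite type, so that Proposition \ref{sh=fun} and the meet semi-lattice structure apply) has two distinct maximal elements $p,q$, but it then takes a single skyscraper sheaf with stalk $G$ concentrated at the meet $p\wedge q$ and reads off $H^1(X,F)=G$ directly from the poset/\v{C}ech description of cohomology (Proposition \ref{sh=fun} iii), Lemma \ref{covdim} ii)). You instead place skyscrapers at the two maximal points themselves, realise their direct sum as a quotient $\phi\colon\mathbb{Z}_X\twoheadrightarrow F$, and extract a nontrivial $H^1$ from the kernel $K=\ker\phi$ via the long exact sequence together with $H^1(X,\mathbb{Z}_X)=0$ from Lemma \ref{covdim} i). What each approach buys: the paper's is shorter but requires evaluating a \v{C}ech-type complex on a skyscraper and needs the meet $p\wedge q$ to exist (separatedness enters there); yours replaces that computation by a connecting-homomorphism argument whose only cohomological input is the vanishing of $H^1$ of a constant sheaf (separatedness enters for you only through Lemma \ref{covdim} i)), at the cost of producing the witness as a kernel sheaf $K$ rather than an explicitly described one --- though its stalks are easy to write down ($0$ at $m_1,m_2$ and $\mathbb{Z}$ elsewhere). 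Your closing caveat about connectedness is justified and applies equally to the paper's proof: a disjoint union of two affine monoid schemes is non-affine yet has vanishing $H^1$ for every sheaf, so the connectedness hypothesis, though unstated in the corollary, is genuinely needed.
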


\begin{proof} Since $X$ is not affine, $X$ has at least $2$ maximal elements. Call them $p$ and
$q$. Then by Proposition \ref{sh=fun} for any abelian group $G$ there is a sheaf $F$ for which $$F_{x}=\begin{cases}0, & {\rm if} \  x\not = p\wedge q\\ G,& {\rm if} \ x=p\wedge q.\end{cases}$$ It follows that $H^1(X,F)=G$.

\end{proof}

\section{Applications to vector bundles over monoid schemes}\label{sec2}
The main result of this section claims that any vector bundle over any separated monoid scheme is a coproduct of line bundles. We start with recalling the notion of a vector bundle over a monoid scheme. Then we use a cohomological description of isomorphism classes of $n$-dimensional vector bundles and the result from Section \ref{sec1.1} to prove the main result.

Let $M$ be a monoid and $S$ a right $M$-set (also called $M$-act), then $A$ is \emph{free of rank} $n$ if it is isomorphic to an $M$-set of the form $A=X\times M$, where $X$ is a set of cardinality $n$ and $M$ acts on $A$ by $(x,m)n=(x,mn)$, $x\in X$ and $m, n\in M$.  The disjoint union of underlying sets induces the coproduct in the category of $M$-sets. One easily sees that if $S$ and $T$ are free of rank $m$ and $n$ respectively, then $S\coprod T$ is free of rank $m+n$. Hence a free $M$-set of rank $n$ is isomorphic to the coproduct of $n$-copies of free modules of rank 1. Because of this we can write $A=M^{\coprod n}$ for a free $M$-set of rank $n$.

The automorphism group $GL(n, M)$ of $M^{\coprod n}$ is isomorphic to the semidirect product $(M^*)^n\rtimes \Sigma_n$, where $\Sigma_n$ is the symmetric group and $M^*$ is the subgroup of invertible elements. The action of $\Sigma_n$ on $(M^*)^n$ is given  by  permuting the factors. Hence we have a split short exact sequence of groups

\begin{equation}\label{gl_mon} 1\to (M^*)^n\xto{i_n} GL(n, M)\to \Sigma_n\to 1.\end{equation}

The coproduct induces an obvious homomorphism of groups $$c=c_{k,n}:GL(k,M)\times GL(n,M)\to GL(k+n,M),$$which fits in the following commutative diagram with exact rows:
$$\xymatrix{1\ar[r]& (M^*)^k\times (M^*)^n  \ar[r]\ar[d]_{c'}^\cong& GL(k, M) \times GL(n, M)\ar[r]\ar[d]_c &\Sigma_k \times \Sigma_n\ar[r]\ar[d]^{\tilde{c}}&1\\
1\ar[r]& (M^*)^{k+n}\ar[r] &GL(k+n, M)\ar[r]& \Sigma_{k+n}\ar[r]& 1.
}$$
Here $c'((x_1,\cdots,x_k),(y_1,\cdots, y_n))=(x_1,\cdots,x_k,y_1,\cdots, y_n)$ and for $\sigma_1\in \Sigma_k,\sigma_2\in \Sigma_n$ the permutation $\tilde{c}(\sigma_1,\sigma_2)\in \Sigma_{k+n}$ is given by 
$$\tilde{c}(\sigma_1,\sigma_2)(i)=\begin{cases}\sigma_1(i)& 1\leq i\leq k, \\ \sigma_2(i-k)& k\leq i\leq k+n.\end{cases}$$
If $S$ and $T$ are right $M$-sets, then the tensor product $S\otimes_M T$ is an $M$-set defined to be the quotient of $S\times T$ by the equivalence relation generated by
$$(sm,t)\sim (s,tm)\sim (s,t)m,$$
where $s\in S$, $t\in T$ and $m\in M$. The image of $(s,t)$ in $S\otimes_M T$ is denoted by $s\otimes t$. One easily sees that if $S$ and $T$ are free of rank $m$ and $n$ respectively, then $S\otimes_M T$ is free of rank $nm$. 

A \emph{vector bundle of rank} $n$ on a  monoid scheme $X$ is a sheaf $\mathcal{V}$ of sets on $X$ together with an 
action of $\mathcal{O}_X$ such that locally $\mathcal{V}$ is isomorphic to  $\mathcal{O}_X^{\coprod n}$. We let $\bf{Vect}_n(X)$ be the category of vector bundles of rank $n$ on $X$. The set of their isomorphism classes of rank $n$ on $X$ is denoted
by $Vect_n(X)$. In the special case when the rank is one, we use the term \emph{line bundle} and write ${\sf Pic}(X)$. The coproduct and tensor product of $M$-sets yield corresponding operations on vector bundles:
$$\coprod: {\bf Vect}_m(X)\times {\bf Vect}_n(X)\to {\bf Vect}_{n+m}(X)$$
and
$$\otimes_{\mathcal{O}_X}: {\bf Vect}_m(X)\times {\bf Vect}_n(X)\to {\bf Vec}t_{mn}(X).$$
The tensor product yields an abelian group structure on ${\sf Pic}(X)$.

\begin{Pro}\label{3} There is a natural bijection
$$Vect_n(X)\cong H^1(X, GL(n,\mathcal{O}_X))$$
and isomorphism of groups
$${\sf Pic}(X)=H^1(X,\mathcal{O^*}_X).$$
Moreover one has a commutative diagram
$$\xymatrix{Vect_k(X)\times Vect_n(X)\ar[r]\ar[d]& Vect_{k+n}(X)\ar[d]\\
H^1(X, GL(k,\mathcal{O}_X))\times H^1(X, GL(n,\mathcal{O}_X))\ar[r]^{\ \ \ \ \ \ \ \ \ \ \ c_*}& H^1(X, GL(k+n,\mathcal{O}_X)),
}
$$
where the top horizontal map is induced by the coproduct, while the bottom map is induced by the above homomorphism $c_{k,n}$.
\end{Pro}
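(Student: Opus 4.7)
The plan is to recognize this as the standard torsor classification of locally trivial objects, adapted to the monoid-scheme setting. A rank $n$ vector bundle $\mathcal{V}$ is by definition locally isomorphic to $\mathcal{O}_X^{\coprod n}$, so there is an open cover $\mathcal{U}=(U_i)_{i\in I}$ together with trivialisations $\varphi_i\colon \mathcal{V}|_{U_i}\xto{\cong}\mathcal{O}_X^{\coprod n}|_{U_i}$. First I would set $g_{ij}:=\varphi_i\varphi_j^{-1}\in GL(n,\mathcal{O}_X)(U_i\cap U_j)$ and check the cocycle identity $g_{ij}g_{jk}=g_{ik}$. This produces a class in $\check{H}^1(\mathcal{U},GL(n,\mathcal{O}_X))$; a different choice of trivialisations modifies the $g_{ij}$ by a coboundary, and a refinement of $\mathcal{U}$ keeps the class fixed, so passing to the colimit over covers gives a well defined map $Vect_n(X)\to \check{H}^1(X,GL(n,\mathcal{O}_X))$, which by the fact quoted in Section \ref{sec1.1} agrees with $H^1(X,GL(n,\mathcal{O}_X))$.

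Next I would construct the inverse. Given a cocycle $(g_{ij})$ for a cover $\mathcal{U}$, glue the sheaves $\mathcal{O}_{U_i}^{\coprod n}$ along the isomorphisms $g_{ij}$; the cocycle condition is exactly what is needed to get a well-defined $\mathcal{O}_X$-sheaf, locally isomorphic to $\mathcal{O}_X^{\coprod n}$, hence a vector bundle of rank $n$. The two constructions are mutually inverse by inspection, yielding the bijection $Vect_n(X)\cong H^1(X,GL(n,\mathcal{O}_X))$. Specialising to $n=1$, the group $GL(1,\mathcal{O}_X)$ is just $\mathcal{O}^*_X$, which is abelian, so $H^1(X,\mathcal{O}^*_X)$ is an abelian group. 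I would then verify that tensoring two line bundles, given by cocycles $(f_{ij})$ and $(f'_{ij})$ after refining to a common cover, produces the line bundle with cocycle $(f_{ij}f'_{ij})$; this is the group law on $\check{H}^1(X,\mathcal{O}^*_X)$, so the group isomorphism ${\sf Pic}(X)=H^1(X,\mathcal{O}^*_X)$ follows.

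For the commutative square, given $\mathcal{V}\in {\bf Vect}_k(X)$ and $\mathcal{W}\in {\bf Vect}_n(X)$ trivialised on a common cover with cocycles $(g_{ij})\in GL(k,\mathcal{O}_X)(U_i\cap U_j)$ and $(h_{ij})\in GL(n,\mathcal{O}_X)(U_i\cap U_j)$, the coproduct trivialisation $\varphi_i\coprod \psi_i$ on $\mathcal{V}\coprod \mathcal{W}$ exhibits this as a vector bundle of rank $k+n$ with transition functions precisely $c_{k,n}(g_{ij},h_{ij})$; checking this reduces to the elementary fact that for free $M$-sets the coproduct of a permutation of basis elements is again a permutation of basis elements, compatibly with the block description of $c_{k,n}$ given just before the proposition. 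Translating back to $H^1$ via the bijection of part (i), this is exactly the commutativity of the stated square.

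The main obstacle is not any single deep step but rather the book-keeping in the nonabelian Čech setup: one has to be attentive that $H^1$ is only a pointed set in general, that passing to a colimit over covers is legitimate for $i=0,1$ (which is precisely the content recalled in Section \ref{sec1.1}), and that the identification of the group law on ${\sf Pic}(X)$ with the abelian $\check{H}^1$ product matches the tensor product of line bundles rather than the coproduct; once these compatibilities are set up carefully, all three claims fall out of the same Čech dictionary.
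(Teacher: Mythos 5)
Your argument is exactly the one the paper gives: the paper's proof is the same standard \v{C}ech dictionary (transition cocycles from local trivialisations, gluing trivial bundles along a cocycle for the inverse, and the identification of tensor product with the group law and of coproduct with $c_{k,n}$), merely stated in a more compressed form as ``standard''. Your version simply spells out the book-keeping that the paper leaves to the reader, so there is nothing to correct.
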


\begin{proof} This is standard. Assume  there is given an open cover ${\mathcal U}=
(U_i\hookrightarrow X)_{i\in I},$ $\bigcup _iU_i=X$ and a 1-cocycle $(f_{ij}\in GL(X, \mathcal{O}_X))$. As in the classical case the associated vector bundle is obtained from the trivial vector bundles on $U_i$ by gluing on $U_i\bigcap U_j$ via $f_{ij}$. One easily checks that this construction yields  a bijection.
\end{proof}

\begin{Co} Any vector bundle over an affine monoid scheme is trivial.
\end{Co}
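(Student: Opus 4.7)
The plan is to chain together the two propositions just proved. Any vector bundle $\mathcal{V}$ on $X$ has some rank $n$ by definition, so its isomorphism class lies in $Vect_n(X)$. By Proposition \ref{3}, the natural bijection $Vect_n(X) \cong H^1(X, GL(n,\mathcal{O}_X))$ is a bijection of pointed sets whose basepoint on the left is the class of the trivial bundle $\mathcal{O}_X^{\coprod n}$. So it suffices to show that the pointed set $H^1(X, GL(n,\mathcal{O}_X))$ is a singleton.

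For this I would invoke Proposition \ref{va}(i), noting that although $GL(n,\mathcal{O}_X)$ is a sheaf of \emph{nonabelian} groups (it is the semi-direct product $(\mathcal{O}_X^*)^n\rtimes \Sigma_n$ by \eqref{gl_mon}), the statement of Proposition \ref{va}(i) is formulated for an arbitrary sheaf of groups $F$, and its proof proceeds exactly in the nonabelian setting: since the unique closed point $\mathfrak{m}$ of $X=Spec(M)$ must belong to every open cover, the singleton cover $\{X\}$ is cofinal in the system of all open covers, so the \v{C}ech cohomology $H^1(X,F)=\check{H}^1(X,F)$ is computed from a constant cosimplicial object and is trivial as a pointed set. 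Applying this with $F = GL(n,\mathcal{O}_X)$ gives $H^1(X, GL(n,\mathcal{O}_X)) = 0$, so $\mathcal{V}\cong \mathcal{O}_X^{\coprod n}$.

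There is no real obstacle here; the only point that needs a moment of care is the nonabelian interpretation, namely checking that the vanishing statement in Proposition \ref{va}(i) is the one needed (a singleton pointed set, not an abelian group being zero), and that Proposition \ref{3} is equally natural in the nonabelian sense so that the basepoint really does correspond to the trivial bundle. Both of these are built into the way the two earlier results are stated, so the corollary follows in essentially one line.
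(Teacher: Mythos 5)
Your argument is exactly the paper's: combine the cohomological classification of Proposition \ref{3} with the vanishing $H^1(X,F)=0$ for affine $X$ from Proposition \ref{va}(i), noting that the latter is stated for arbitrary sheaves of groups and so applies to the nonabelian sheaf $GL(n,\mathcal{O}_X)$. Your added care about the pointed-set interpretation is correct and only makes explicit what the paper leaves implicit.
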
 

For line bundles this fact first appears in \cite{w}.

\begin{proof} This follows from the fact that cohomology vanishes for affine monoid schemes, see Proposition \ref{va}.
\end{proof}
\begin{Th}\label{dec} Let $X$ be a connected separated monoid scheme. Then any vector bundle of rank $n$ is a coproduct of $n$ copies of line bundles. Moreover, this decomposition is unique up to permuting summands.
\end{Th}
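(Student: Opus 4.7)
The plan is to combine the cohomological classification from Proposition~\ref{3} with Lemma~\ref{sdc} applied (after sheafification) to the split short exact sequence~(\ref{gl_mon}). By Proposition~\ref{3}, $Vect_n(X)\cong H^1(X,GL(n,\mathcal{O}_X))$, so it suffices to compute this pointed set. Applying Lemma~\ref{sdc} to
$$1\to (\mathcal{O}_X^*)^n\xto{i_n} GL(n,\mathcal{O}_X)\to \Sigma_n\to 1$$
yields the short exact sequence
$$1\to H^1(X,(\mathcal{O}_X^*)^n)_{\Sigma_n(X)}\xto{i_{n*}} H^1(X,GL(n,\mathcal{O}_X))\to H^1(X,\Sigma_n)\to 1.$$

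Next I would evaluate the two flanking terms explicitly. The sheaf $\Sigma_n$ is just the constant sheaf of the symmetric group, so Lemma~\ref{covdim}(i), applied to the connected separated $X$, gives $H^1(X,\Sigma_n)=0$. On the left, since cohomology of abelian sheaves commutes with finite products, $H^1(X,(\mathcal{O}_X^*)^n)\cong {\sf Pic}(X)^n$, and connectedness of $X$ gives $\Sigma_n(X)=\Sigma_n$, acting on ${\sf Pic}(X)^n$ by permuting coordinates. Combining these facts,
$$Vect_n(X)\cong {\sf Pic}(X)^n/\Sigma_n,$$
so a rank-$n$ vector bundle on $X$ is classified by an unordered $n$-tuple of line bundles.

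It remains to identify $i_{n*}\colon {\sf Pic}(X)^n\to Vect_n(X)$ with the iterated coproduct. This follows from the commutative diagram in Proposition~\ref{3}: the splitting $(\mathcal{O}_X^*)^n\hookrightarrow GL(n,\mathcal{O}_X)$ factors as the iterated composition of the homomorphisms $c_{k,1}$ restricted to invertible diagonal entries, which on bundles realises the coproduct operation. Hence every rank-$n$ bundle is a coproduct of $n$ line bundles, and the identification with the $\Sigma_n$-orbit space gives uniqueness up to permutation of summands.

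The main obstacle is verifying that the abstract action of $\Sigma_n(X)$ on $H^1(X,(\mathcal{O}_X^*)^n)$ supplied by Lemma~\ref{sdc} (whose existence invokes the nonabelian fibration of \cite{kansas}) indeed coincides with the tautological permutation action induced by the splitting of~(\ref{gl_mon}); this requires unwinding the cocycle-level description of that action. A secondary worry is that Lemma~\ref{covdim}(i) is formally stated for monoid schemes of finite type, so to cover the general case one may need either to reduce to finite type or to argue directly from the meet semi-lattice structure on the poset underlying $X$ that the Čech complex of a constant sheaf is contractible.
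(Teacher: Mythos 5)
Your proposal follows essentially the same route as the paper's own proof: Proposition~\ref{3} to identify $Vect_n(X)$ with $H^1(X,GL(n,\mathcal{O}_X))$, Lemma~\ref{sdc} applied to the split sequence~(\ref{gl_mon}), vanishing of $H^1$ with constant coefficients $\Sigma_n$ on the connected separated $X$, and identification of $i_{n*}$ with the iterated coproduct via the diagram in Proposition~\ref{3}. The two caveats you flag are reasonable (and your citation of Lemma~\ref{covdim}(i) for the vanishing of $H^1(X,\Sigma_n)$ is in fact more accurate than the paper's reference to Proposition~\ref{va}), but the argument is the same.
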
 
\begin{proof} The coproduct induces the natural map
\begin{equation}\label{as} {\sf Pic}(X)^n\to Vect_n(X).\end{equation}
Since the operation induced by the coproduct is commutative
this factors through the orbits space $ {\sf Pic}(X)^n_{\Sigma_n} \to Vect_n(X).$
  We need to show that this map is a bijection. By the commutativity of the diagram in  Proposition \ref{3} we see that the map in (\ref{as}) is the same (up to isomorphism) as the map 
$H^1(X,(\mathcal{O}_X^*)^n)\xto{i_n^*} H^1(X, GL(n, \mathcal{O}_X))$. Here $i_n$  is the sheaf homomorphism, which  fits in the following split short exact sequence:
$$0\to (\mathcal{O}_X^*)^n \xto{i_n} GL(n, \mathcal{O}_X)\to \Sigma_n\to 0,$$
where $\Sigma_n$ is considered as a constant sheaf. Apply now Lemma \ref{sdc} to get the short exact sequence of pointed sets
$$0\to \left (H^1(X,\mathcal{O}_X^*)\right )^n_{\Sigma_n} \xto{i_n^*} H^1(X,GL(k,\mathcal{O}_X)) \to H^1(X,\Sigma_n)\to 1.$$
By Proposition \ref{va} the last term vanishes. Hence $i_n$ yields the isomorphism
$$({\sf Pic}(X))^n_{\Sigma_n}  \cong \left( H^1(X,\mathcal{O}_X^*)\right)^n_{\Sigma_n} \cong H^1(X,GL(k,\mathcal{O}_X))\cong  Vect_n(X)$$
and the result follows.
\end{proof} 
For the special case when $X={\mathbb P}^n$ (in the monoid world) the theorem  was first proven in \cite{bv} by completely different means.

\section{Additivity of the functor {\sf Pic}}\label{sec3}
In this section we prove that ${\sf Pic}(X\times Y)\cong {\sf Pic}(X)\times {\sf Pic}( Y)$ provided $X$ and $Y$ are separated monoid schemes.

Let $T$ be a contravariant functor defined on the category of separated monoid schemes with values in the category of abelian groups. We will say that the functor $T$ is \emph{additive for the pair} $(X,Y)$, provided the natural morphism $T(X)\oplus T(Y)\to T(X\times Y)$ is an isomorphism, moreover $T$ is called \emph{additive} if it is additive for all pairs $(X,Y)$, where $X$ and $Y$ are  separated monoid schemes.

For a separated monoid scheme $X$ one puts
$$T^i(X)=H^i(X,{\mathcal{O}_X^*}), \ i\geq 0.$$ Thus $T^1={\sf Pic}$.

We start with the following lemma.
\begin{Le}\label{ind_add} Let $X,Y$ be separated monoid schemes. Let $X=U\cup V$ and $W=U\cap V$. Assume the functors $T^{i}$, $i\geq 0$ are additives for the pairs $(U,Y),(V,Y)$ and $(W,Y)$. Then they will also be additive for the pair $(X,Y)$.
\end{Le}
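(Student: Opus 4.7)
The strategy is to compare two Mayer--Vietoris long exact sequences via the five lemma. The open cover $X=U\cup V$ with $W=U\cap V$ yields the Mayer--Vietoris long exact sequence for the sheaf $\mathcal{O}_X^*$,
\[\cdots\to T^{i-1}(W)\to T^i(X)\to T^i(U)\oplus T^i(V)\to T^i(W)\to T^{i+1}(X)\to\cdots.\]
Because the projection $p\colon X\times Y\to X$ is continuous, the open sets $U\times Y$ and $V\times Y$ form an open cover of $X\times Y$ with intersection $W\times Y$, and produce the analogous Mayer--Vietoris sequence for $\mathcal{O}_{X\times Y}^*$.

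Next I would form the direct sum of the first sequence with the split short exact sequence $0\to T^i(Y)\xrightarrow{\Delta}T^i(Y)\oplus T^i(Y)\xrightarrow{a-b}T^i(Y)\to 0$, regarded as the Mayer--Vietoris sequence of the tautological cover $Y=Y\cup Y$ (whose connecting homomorphism vanishes). The resulting long exact sequence has $T^i(X)\oplus T^i(Y)$, $T^i(U)\oplus T^i(V)\oplus T^i(Y)\oplus T^i(Y)$, and $T^i(W)\oplus T^i(Y)$ in the positions occupied by $T^i(X\times Y)$, $T^i(U\times Y)\oplus T^i(V\times Y)$, and $T^i(W\times Y)$ in the second sequence.

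Using the projections $p_Z\colon Z\times Y\to Z$ for $Z\in\{X,U,V,W\}$ together with the projection $q\colon X\times Y\to Y$ and its restrictions, one defines a morphism from this combined long exact sequence to the Mayer--Vietoris sequence of $X\times Y$ whose vertical arrows at the relevant positions are the natural additivity maps $(\alpha,\beta)\mapsto p_Z^*\alpha+q^*\beta$. By hypothesis, these arrows are isomorphisms for $Z\in\{U,V,W\}$ in every degree. The five lemma, applied degree by degree, then forces the vertical arrow $T^i(X)\oplus T^i(Y)\to T^i(X\times Y)$ to be an isomorphism for every $i\geq 0$, which is exactly additivity of $T^i$ for the pair $(X,Y)$.

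The main obstacle is to verify that this comparison diagram actually commutes, in particular the squares containing the connecting homomorphism $\delta\colon T^i(W\times Y)\to T^{i+1}(X\times Y)$. This reduces to naturality of the Mayer--Vietoris connecting map under morphisms of covered spaces: applied to $p$, which sends the product cover to the cover $(U,V)$ of $X$, it identifies $\delta\circ p_W^*$ with $p_X^*\circ\delta_X$; applied to $q$, which collapses both $U\times Y$ and $V\times Y$ onto $Y$, it identifies $\delta\circ q^*$ with zero, because the pullback along $q$ is the tautological cover of $Y$ whose connecting map vanishes. This matches the structure of the auxiliary direct summand, and once these naturality statements are in place the remainder is a routine diagram chase.
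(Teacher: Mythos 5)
Your proposal is correct and follows essentially the same route as the paper: form the direct sum of the Mayer--Vietoris sequence of $X=U\cup V$ with the tautological one for $Y=Y\cup Y$, identify its terms with those of the Mayer--Vietoris sequence of the cover $(U\times Y,\,V\times Y)$ of $X\times Y$ via the additivity hypotheses, and conclude by the five lemma. Your explicit verification that the comparison squares involving the connecting homomorphisms commute is a detail the paper leaves implicit, but it does not change the argument.
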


\begin{proof} Since $X=U\cup V$ we get the Mayer-Vietoris sequence 
$$  \begin{aligned}0&\rightarrow& T^0(X)\rightarrow T^0(U)\oplus T^0(V)\rightarrow T^0(W)\rightarrow \\ &\rightarrow&T^1(X)\rightarrow T^1(U)\oplus T^1(V)\rightarrow T^1(W)\to\cdots.
  \end{aligned}$$
Likewise, by looking at $Y$ as the union with itself, i.e. $Y=Y\cup Y$, we get 
$$  \begin{aligned}0&\rightarrow& T^0(Y)\rightarrow T^0(Y)\oplus T^0(Y)\rightarrow T^0(Y)\rightarrow \\
&\rightarrow&T^1(Y)\rightarrow T^1(Y)\oplus T^1(Y)\rightarrow T^1(Y)\to \cdots.
  \end{aligned}$$
Putting these two together we get 
  $$\begin{aligned}0&\rightarrow T^0(X)\oplus T^0(Y)\rightarrow T^0(U)\oplus T^0(V)\oplus T^0(Y)\oplus T^0(Y)\rightarrow T^0(W)\oplus T^0(Y) \\
&\rightarrow T^1(X)\oplus T^1(Y)\rightarrow T^1(U)\oplus T^1(V)\oplus T^1(Y)\oplus T^1(Y)\rightarrow T^1(W)\oplus T^1(Y)\to \cdots.
  \end{aligned}$$
Using the assumptions, we get
\begin{equation}\label{0.4}
  \begin{aligned}0&\rightarrow T^0(X)\oplus T^0(Y)\rightarrow T^0(U\times Y)\oplus T^0(V\times Y)\rightarrow  T^0(W\times Y)\rightarrow \\ 
&\rightarrow T^1(X)\oplus T^1(Y)\rightarrow T^1(U\times Y)\oplus T^1(V\times Y)\rightarrow T^1(W\times Y)\to \cdots
  \end{aligned}
\end{equation}
Now we write the Mayer-Vietoris sequence for $X\times Y$, 
we get
\begin{equation}\label{0.5}
  \begin{aligned}0&\rightarrow T^0(X\times Y)\rightarrow T^0(U\times Y)\oplus T^0(V\times Y)\rightarrow  T^0(W\times Y)\\ 
&\rightarrow T^1(X\times Y)\rightarrow T^1(U\times Y)\oplus T^1(V\times Y)\rightarrow T^1(W\times Y)\to \cdots.
  \end{aligned}
\end{equation}
Now comparing  (\ref{0.4}) and (\ref{0.5}) and based on the  five-lemma we get the desired result.
\end{proof}

\begin{Th} Let $X$ and $Y$ be separated monoid schemes with finite affine coverings. Assume that for all $i$, $T^i$ is additive for affine pairs. Then for all $i\geq 0$ one has $T^i(X\times Y)\cong T^i(X)+T^i(Y)$. In particular $${\sf Pic}(X\times Y)\cong {\sf Pic}(X)+{\sf Pic}(Y).$$
\end {Th}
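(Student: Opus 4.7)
The plan is to reduce the theorem to its affine--affine case, which is precisely the hypothesis, by a double induction using Lemma \ref{ind_add}. Writing $n(X)$ for the minimal number of open affines needed to cover a separated monoid scheme $X$, I would consider the statement
\[
P(n,m)\colon \ T^i \text{ is additive for every pair } (X,Y) \text{ with } n(X)\le n,\ n(Y)\le m, \text{ for all } i\ge 0.
\]
The base case $P(1,1)$ is exactly the assumed affine additivity, and the conclusion of the theorem is obtained by taking $n,m$ large enough to accommodate given affine covers of $X$ and $Y$.

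For the inductive step in the first variable, I would choose a minimal affine cover $X = U_1\cup\cdots\cup U_n$ and set $U=U_1\cup\cdots\cup U_{n-1}$, $V=U_n$, and $W=U\cap V$. Both $U$ and $V$ are separated open subschemes of $X$, with $V$ affine and $n(U)\le n-1$. Because $X$ is separated, each intersection $U_i\cap U_n$ is affine (cf.\ \cite[Corollary 3.8]{cort}), so $W=\bigcup_{i<n}(U_i\cap U_n)$ is covered by at most $n-1$ affine opens, whence $n(W)\le n-1$. The inductive hypotheses $P(n-1,m)$ and $P(1,m)$ supply additivity of $T^i$ for the three pairs $(U,Y), (V,Y), (W,Y)$, and Lemma \ref{ind_add} upgrades this to additivity for $(X,Y)$. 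The same argument applied to a decomposition of $Y$ handles growth in the second variable: this is legitimate because of the canonical isomorphism $X\times Y\cong Y\times X$, which shows that additivity for $(X,Y)$ is equivalent to additivity for $(Y,X)$, so Lemma \ref{ind_add} may be applied after swapping roles. Iterating these two steps starting from $P(1,1)$ yields $P(n,m)$ for all $n,m\ge 1$. The final statement about $\sf Pic$ is then the specialisation $i=1$, since by Proposition \ref{3} one has $T^1=H^1(-,\mathcal{O}^*_{-})={\sf Pic}$.

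I do not expect a substantial obstacle, as the argument is essentially a formal bootstrap from Lemma \ref{ind_add}. The one point requiring care is the stability of the property \emph{``separated and admitting a finite affine cover''} under the decomposition: $W$ must inherit an affine cover of size strictly smaller than $n(X)$ in order for the induction to close. This is precisely the place where the separatedness of $X$ is used, via the fact that pairwise intersections of affine opens are again affine, and it is what keeps the induction running.
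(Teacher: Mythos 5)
Your proof is correct and takes essentially the same route as the paper: a double induction on the minimal size of affine covers, peeling off one affine piece $V=U_n$, using separatedness to bound the number of affines covering $W=U\cap V$, and invoking Lemma \ref{ind_add} to close the induction, with the $\Sigma$-symmetry of the product handling the second variable. Your write-up is if anything slightly more careful than the paper's about why $W$ admits a cover by at most $n-1$ affines.
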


\begin{proof}  We will prove this by induction on the number of affine coverings of $X$ and $Y$. The base step holds by assumption. Now assume additivity holds for all pairs with $n$ and $m$ or less affine coverings. We will then prove that it also holds for $n+1$ and $m$ affine coverings. By symmetry it will hold for $n$ and $m+1$ as well and hence for any pair of finite affine coverings.

Let $X$ be a monoid scheme with $n+1$ affine covers. Let $Y$ be one with $m$ affine covers. Then $X$ can be written as $X=X'\cup U$ where $U$ is affine and $X'$ can be covered by $n$ affine components. Since $X'\cap U$ can be covered by $n$ affine components, by assumption of separability, the condition of the Lemma is satisfied. Hence additivity has been proven for $X$ and $Y$, which implies the theorem. To deduce the corollary, recall that ${\sf Pic}(X)=0$ if $X$ is affine. Hence the base step holds trivially for this particular functor.
\end{proof}

For the special case  $Y=Spec({\mathbb N})$ the theorem for the functor ${\sf Pic}$ was first proven in \cite{w} by completely different means.

\section{Cartier divisors and line bundles}\label{sec4}
We start by defining an analogue of the Cartier divisors for monoid schemes. 
To do so, we will closely follow the classical construction of the Cartier divisors for the usual schemes (see for example \cite[p. 434--444]{bosch}).  
Let $M$ be a monoid, we call an element $m\in M$ \emph{regular} provided for every elements $m,x,y$ such that $mx = my$, we have $x = y$. Denote by $R(M)$ the set of all regular elements in $M$. It is clear that $R(M)$ is a submonoid of $M$ which contains the group of invertible elements: $M^*\subset R(M)$. Thus one can take the localisation $M_{R(M)}$ of $M$ with respect to $R(M)$, which is called the \emph{total quotient monoid} of $M$. The canonical map $M\to M_{R(M)}$ is injective. 

Now we would like to sheafify these constructions. Let $X$ be a monoid scheme. For any open set $U$  define ${\mathcal R}_X(U)$ to be the set of \emph{locally regular  elements}, that is the set of such $m\in {\mathcal O}_X(U)$, for which $m_x\in R(\mathcal O)_{X,x}$ for all $x\in U$. 
By our construction the restriction map, maps regular elements to regular elements. Hence $U\mapsto {\mathcal R}_X(U)$ and $U\mapsto {\mathcal O}_X(U)_{{\mathcal R}_X(U)}$ are presheaves of monoids. The first one is obviously a subsheaf of ${\mathcal O}_X$. The sheaf associated to the second presheaf is denoted by ${\mathcal M}_X$ and is referred to as the \emph{meromorphic functions} on $X$. As in the classical case, the presheaf $U\mapsto {\mathcal O}_X(U)_{{\mathcal R}_X(U)}$  is separated  and hence is a subpresheaf of its  sheafification $ {\mathcal M}_X$. This implies the following result.
\begin{Le} Let $X$ be  a monoid scheme. Then the canonical morphism of monoid sheaves
$${\mathcal O}_X\to {\mathcal M}_X$$
is injective.
\end{Le}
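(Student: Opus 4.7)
The plan is to reduce the lemma to the injectivity of the concrete localisation map at presheaf level, and then to invoke the already-noted fact that the presheaf $U\mapsto\mathcal{O}_X(U)_{\mathcal{R}_X(U)}$ is separated.

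First I would establish, for every open $U\subseteq X$, the injectivity of the localisation map
$$\varphi_U\colon \mathcal{O}_X(U)\longrightarrow \mathcal{O}_X(U)_{\mathcal{R}_X(U)}.$$
Suppose $\varphi_U(a)=\varphi_U(b)$ for some $a,b\in\mathcal{O}_X(U)$; then by construction of the localisation of a commutative monoid there exists $r\in\mathcal{R}_X(U)$ with $ra=rb$ in $\mathcal{O}_X(U)$. Passing to the stalk at any $x\in U$ gives $r_x a_x=r_x b_x$ in $\mathcal{O}_{X,x}$, and the very definition of $\mathcal{R}_X(U)$ tells us that $r_x\in R(\mathcal{O}_{X,x})$ is regular; therefore $a_x=b_x$. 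Since this equality holds at every $x\in U$, the sheaf property of $\mathcal{O}_X$ forces $a=b$ globally on $U$.

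Next, by the observation preceding the lemma, the presheaf $\mathcal{N}\colon U\mapsto\mathcal{O}_X(U)_{\mathcal{R}_X(U)}$ is separated, so the canonical map $\mathcal{N}(U)\to\mathcal{M}_X(U)$ into its sheafification is injective for every open $U$. Composing with the first step yields injective maps $\mathcal{O}_X(U)\hookrightarrow\mathcal{M}_X(U)$ for every $U$, which is precisely the statement that $\mathcal{O}_X\to\mathcal{M}_X$ is a monomorphism of sheaves of monoids (equivalently, it is injective on every stalk, since stalks are filtered colimits of the sectionwise injections).

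The only real obstacle is the first step, where one must check that the pointwise condition defining $\mathcal{R}_X(U)$ genuinely produces cancellable elements when acting on sections rather than on germs. This becomes transparent once one combines the regularity of each germ $r_x$ with the sheaf axiom for $\mathcal{O}_X$; nothing deeper is needed, and the rest of the proof is formal.
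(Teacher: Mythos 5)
Your proof is correct and follows essentially the same route as the paper, which simply notes that the presheaf $U\mapsto \mathcal{O}_X(U)_{\mathcal{R}_X(U)}$ is separated and hence embeds in its sheafification $\mathcal{M}_X$. The one point you spell out that the paper leaves implicit --- that localising at \emph{locally} regular elements is still injective, checked by passing to stalks where each germ $r_x$ is genuinely regular and then using separatedness of the sheaf $\mathcal{O}_X$ --- is exactly the right way to fill that gap.
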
 
Consider now the short exact sequence of sheaves of abelian groups:
\begin{equation}\label{cartier}
1\to {\mathcal O}_X^*\to {\mathcal M}_X^*\to {\mathcal M}_X^*/{\mathcal O}_X^* \to 1, \end{equation}
where ${\mathcal M}_X^*$ is the subsheaf of invertible elements of ${\mathcal M}_X$. As in the classical case, the global sections of the sheaf ${\mathcal M}_X^*/{\mathcal O}_X^* $ are called the \emph{Cartier divisors}. A Cartier divisor is called \emph{principal} provided it corresponds to the image of an element of  ${\mathcal M}_X^*(X)$.

The quotient of the group of Cartier divisors by the principal divisors is denoted by ${\sf CaCl}(X)$.

A separated monoid scheme $X$ is called \emph{cancellative} if for any affine open monoid subscheme $Spec(M)$, the monoid $M$ is cancellative. One easily sees that $X$ is cancellative if and only if ${\mathcal O}_{X,x}$ is cancellative for any $x\in X$.
\begin{Pro}\label{5.1} One has a monomorphism $ {\sf CaCl}(X)\to {\sf Pic}(X)$, which is an isomorphism if $X$ is  cancellative.
\end{Pro}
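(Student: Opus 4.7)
My plan is to extract the proposition from the long exact cohomology sequence attached to (\ref{cartier}). Since (\ref{cartier}) is a short exact sequence of sheaves of abelian groups, it induces
\begin{equation*}
1 \to {\mathcal O}_X^*(X) \to {\mathcal M}_X^*(X) \to ({\mathcal M}_X^*/{\mathcal O}_X^*)(X) \xto{\delta} H^1(X,{\mathcal O}_X^*) \to H^1(X,{\mathcal M}_X^*).
\end{equation*}
By definition the third group is the group of Cartier divisors, and the image of ${\mathcal M}_X^*(X)$ in it is precisely the subgroup of principal divisors. Hence ${\sf CaCl}(X)$ is the cokernel of ${\mathcal M}_X^*(X)\to ({\mathcal M}_X^*/{\mathcal O}_X^*)(X)$, so by exactness $\delta$ descends to an injection ${\sf CaCl}(X)\hookrightarrow H^1(X,{\mathcal O}_X^*)={\sf Pic}(X)$. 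This settles the first assertion without any hypothesis of cancellativity.

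For the isomorphism in the cancellative case it suffices to establish that $\delta$ is surjective, and for this I would prove that $H^1(X,{\mathcal M}_X^*)=0$. The structural observation I would exploit is that when $X$ is cancellative the sheaf ${\mathcal M}_X^*$ is (locally) constant. Indeed, if $M$ is cancellative then every element is regular, so $R(M)=M$, and hence $M_{R(M)}=G(M)$, the group envelope. Moreover, localising a cancellative monoid at any prime $\p$ happens inside $G(M)$, giving $G(M_{\p})=G(M)$. Therefore on any affine open $U=Spec(M)\subset X$ the defining presheaf $V\mapsto {\mathcal O}_X(V)_{{\mathcal R}_X(V)}$ already takes the constant value $G(M)$ on every basic open of $U$, and since $U$ is connected (the unique maximal ideal lies in every closure) its sheafification $\mathcal{M}_X|_U$ is the constant sheaf $G(M)$. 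In particular $\mathcal{M}_X|_U$ is a sheaf of groups, so $\mathcal{M}_X^*|_U=\mathcal{M}_X|_U$.

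The main obstacle will be assembling this local picture into a globally constant sheaf on connected components of $X$. On an overlap $Spec(M)\cap Spec(M')$, which by separatedness is affine of the form $Spec(M'')$ with $M''$ a common localisation of both $M$ and $M'$, the three group envelopes coincide canonically, so the constant values on the pieces glue. On each connected component of $X$ this identifies ${\mathcal M}_X^*$ with the constant sheaf given by the common group envelope. Applying Lemma \ref{covdim}(i) (and additivity over connected components) then yields $H^1(X,{\mathcal M}_X^*)=0$, hence the surjectivity of $\delta$ and the claimed isomorphism ${\sf CaCl}(X)\cong {\sf Pic}(X)$.
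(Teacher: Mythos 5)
Your argument is correct and is essentially the paper's own proof: both extract the six-term exact sequence from (\ref{cartier}), identify ${\sf CaCl}(X)$ with the cokernel of ${\mathcal M}_X^*(X)\to ({\mathcal M}_X^*/{\mathcal O}_X^*)(X)$ to get the injection into $H^1(X,{\mathcal O}_X^*)$, and in the cancellative case kill $H^1(X,{\mathcal M}_X^*)$ by observing that ${\mathcal M}_X^*$ is a constant sheaf. You merely supply the details (every element regular, localisation equals the group envelope, gluing over affine overlaps, Lemma \ref{covdim}) that the paper leaves implicit.
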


\begin{proof} The cohomological exact sequence associated to the  exact sequence \ref{cartier} yields the following exact sequence
$$1\to {\sf CaCl}(X)\to {\sf Pic}(X) \to H^1(X, {\mathcal M}^*_X),$$
which implies the first assertion. If $X$ is  cancellative, then ${\mathcal M}_X$ is a constant sheaf and hence the result follows.
\end{proof}

\section{$s$-divisors and line bundles}\label{sec5}
As we can see in \ref{cartier}, to define the Cartier divisors, we actually don't need for $\mathcal{O}_X$ to map injectively to $\mathcal{M}_X$. We only need for $\mathcal{O}^*_X$ to map injectively to $\mathcal{M}^*_X$. The rest of this section will be devoted to generalising the notions of cancellative monoids, regular elements and the Cartier divisors. Indeed, the best part is that most of it translates to ringed spaces as well. 

A monoid $M$ is called $s$-\emph{cancellative} provided for any elements $a,x,y$ with $ax=ay$ we have $(xy)^nx=(xy)^ny$ for some $n\in\mathbb{N}$. It is clear that any 
cancellative monoid is $s$-cancellative. 
Our next aim is to give several equivalent conditions for a monoid to be  $s$-cancellative, but first a notation. For an element $c\in M$ we set
$$\p_c=\bigcup_{c\not \in \p\in SpecM} \p.$$
Since the union of prime ideals is prime, we see that $\p_c$ is the maximal prime ideal which does not contains $c$. 
\begin{Le}\label{gr_il} For elements $b,c$ of a monoid $M$ one has $b\not \in\p_c$ if only if there is a natural number $n$ and $t\in M$ such that $$c^n=bt.$$
\end{Le}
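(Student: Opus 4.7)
The plan is to recognize this lemma as the monoid-theoretic instance of the classical fact that the radical of an ideal equals the intersection of the primes containing it, applied to the principal ideal $bM$. Since $\p_c$ is by construction the union of all primes not containing $c$, the condition $b\not\in\p_c$ is equivalent to saying that every prime $\p$ of $M$ containing $b$ also contains $c$. The existence of $n$ and $t$ with $c^n=bt$ is in turn exactly the assertion that $c$ lies in the radical $\sqrt{bM}$. So the lemma amounts to the monoid version of $\sqrt{bM}=\bigcap_{b\in\p}\p$, evaluated at the element $c$.

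For the implication $(\Leftarrow)$ I would proceed directly. Assume $c^n=bt$. Given any prime $\p$ with $c\not\in\p$, if $b$ were in $\p$ then $bt=c^n$ would lie in $\p$ as well, forcing $c\in\p$ by primality, a contradiction. Hence $b\not\in\p$ for every prime missing $c$, which is exactly $b\not\in\p_c$.

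For $(\Rightarrow)$ I would argue by contraposition. Suppose no such $n,t$ exist, so that the submonoid $S=\{1,c,c^2,\ldots\}$ is disjoint from the principal ideal $bM$. Applying Zorn's lemma to the family of ideals of $M$ that contain $bM$ and are disjoint from $S$, ordered by inclusion (unions of chains clearly remain in the family), produces a maximal such ideal $\p$. The task then reduces to showing that $\p$ is prime, after which $b\in bM\subseteq\p$ together with $c\in S$ lying outside $\p$ witnesses $b\in\p_c$, contradicting the hypothesis.

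The one substantive step is the primality of $\p$. If $xy\in\p$ with $x,y\not\in\p$, then $\p\cup xM$ and $\p\cup yM$ are again ideals (a union of ideals is an ideal in a commutative monoid) and strictly enlarge $\p$. By maximality each meets $S$, and since $\p\cap S=\emptyset$ the intersection must occur in $xM$, respectively $yM$, producing equalities $c^m=xu$ and $c^k=yv$. Multiplying yields $c^{m+k}=xy\cdot uv\in\p$, contradicting $\p\cap S=\emptyset$. The main obstacle, such as it is, is this routine but slightly fiddly primality verification; everything else is bookkeeping.
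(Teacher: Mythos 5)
Your argument is correct and follows the same route as the paper: both reduce the statement to the observation that $b\not\in\p_c$ means every prime containing $b$ contains $c$, and then invoke the standard prime-avoidance/radical fact for monoids. The only difference is that the paper cites this last fact from Grillet (Lemma III.1.1), whereas you prove it directly via Zorn's lemma applied to ideals containing $bM$ and disjoint from $\{c^n\}_{n\geq 0}$; your primality verification for the maximal such ideal is sound, so the proof is complete and self-contained.
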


\begin{proof} Clearly $b\not \in \p_c$ if and only if any prime ideal which contains $b$ contains $c$ as well. Thus the result follows from  \cite[Lemma III.1.1]{grillet}.

\end{proof}

\begin{Th}\label{scancel} Let $M$ be a finitely generated monoid. Then the following conditions are equivalent:
\begin{enumerate}

\item $M$ is $s$-cancellative.

\item For any elements $x,y,a$ with $xa=ya$ and any prime ideal $\p$ such that $x,y\not\in \p$ there exists an element $b\not\in\p$ such that $xb=yb$.

\item For any prime ideals $\q\subset \p$ the induced map on invertible elements $$(M_\p)^*\to (M_\q)^*$$
is injective.

\item For any elements $x,y,a$ with $xa=ya$  there exists an element $b\not\in\p_{xy}$ such that $xb=yb$.
\end{enumerate}
\end{Th}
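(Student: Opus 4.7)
The plan is to establish the cycle $(1) \Rightarrow (2) \Rightarrow (3) \Rightarrow (4) \Rightarrow (1)$; Lemma~\ref{gr_il} will serve as the dictionary between the element-level statements $(1)$, $(4)$ and those phrased in terms of the auxiliary prime $\p_{xy}$. The two routine ends of the cycle are straightforward. For $(1) \Rightarrow (2)$: given $xa=ya$ and a prime $\p$ with $x,y\notin\p$, the element $b:=(xy)^n$ supplied by $(1)$ satisfies $xb=yb$ and, since $\p$ is prime and $x,y\notin\p$, lies outside $\p$. For $(4) \Rightarrow (1)$: given $b\notin\p_{xy}$ with $xb=yb$, Lemma~\ref{gr_il} applied to $c=xy$ yields $n\in\N$ and $t\in M$ with $(xy)^n = bt$; multiplying $xb=yb$ by $t$ then gives $(xy)^n x = (xy)^n y$.

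The implication $(2) \Rightarrow (3)$ is a matter of unwinding the definition of the localization. Fix $\q\subseteq\p$ and suppose $m/s,\, m'/s'\in (M_\p)^*$ have the same image in $(M_\q)^*$. Then $m,s,m',s'\notin\p$ and there exists $u\notin\q$ with $ms'u = m'su$. Setting $x:=ms'$, $y:=m's$ and $a:=u$, primality of $\p$ gives $x,y\notin\p$, and $(2)$ then produces $b\notin\p$ with $xb=yb$; equivalently $m/s = m'/s'$ in $M_\p$.

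The substantive step is $(3) \Rightarrow (4)$. Given $ax=ay$, I consider the chain $\q:=\p_{axy}\subseteq\p_{xy}=:\p$: any prime not containing $axy$ also avoids $xy$, so every prime missing $axy$ sits inside $\p_{xy}$. In $M_\q$ the element $a$ is invertible, so $ax=ay$ forces $x/1 = y/1$ in $(M_\q)^*$; on the other hand $x/1,\, y/1 \in (M_\p)^*$ since $x,y\notin\p$. Their images in $(M_\q)^*$ coincide, so the injectivity hypothesis $(3)$ gives $x/1 = y/1$ already in $M_\p$, which unfolds to the existence of $b\notin\p_{xy}$ with $xb=yb$, i.e.\ condition $(4)$.

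The main obstacle lies in this last step: the naive idea of working inside $M_{\p_{xy}}$ alone fails because $a$ need not be invertible there, so $ax=ay$ does not by itself yield $x/1 = y/1$. The trick is to pass further to the smaller prime $\p_{axy}$, where $a$ \emph{is} a unit, extract the desired equality in $(M_{\p_{axy}})^*$, and then transport it back to $(M_{\p_{xy}})^*$ via the injectivity supplied by $(3)$.
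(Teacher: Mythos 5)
Your proof is correct and follows essentially the same route as the paper: the same cycle $(1)\Rightarrow(2)\Rightarrow(3)\Rightarrow(4)\Rightarrow(1)$, with Lemma~\ref{gr_il} doing the same work in $(4)\Rightarrow(1)$. The only (cosmetic) difference is in $(3)\Rightarrow(4)$, where the paper applies the injectivity hypothesis with $\q=\emptyset$ (so that the target is the group of fractions $G$, in which $a$ is automatically invertible), whereas you use $\q=\p_{axy}$; both choices serve the identical purpose of passing to a localization where $a$ becomes a unit.
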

\begin{proof} i) $\Longrightarrow$ ii). It suffices to take $b=(xy)^n$. 

 ii) $\Longrightarrow$ iii).  Without loss of generality we may assume that $\q=\emptyset$. In this case $M_\q$ is a group, which is obtained by localising $M$ with all of $M$. This group is denoted by $G$. 
Take an element $$\frac{x}{y}\in Ker\left ((M_\p)^*\to G\right), \  \ x\in M, \  y\not \in \p$$
Thus there exists an element $a\in M$ such that $xa=ya$. By assumption $xb=yb$, where $b\not \in\p$ and therefore $\frac{x}{y}=1$ in $M_\p$.

iii) $\Longrightarrow$ iv). By assumption the map $(M_\p)^*\to G$ is injective for any prime ideal $\p$, where $G$ is the same as in the previous case. 
Suppose $a,x,y\in M$ are such elements that $xa=ya$. Since $x,y\not \in \p_{xy}$, we have $$z=\frac{x}{y}\in \left (M_{\p_{xy}}\right )^*.$$ The condition $xa=ya$ implies that the image of $z$ in $G$ is $1$. Hence $z=1$ by assumption. Therefore $xb=yb$ for some $b\not \in \p_{xy}$.

iv) $\Longrightarrow$ i). Assume $xa=ya$. Then by assumption $xb=yb$, where $b\not \in \p_{xy}$. By Lemma \ref{gr_il} there is a natural number $n$ and element $t\in M$ such that $(xy)^n=bt$. So $$(xy)^nx=btx=bty=(xy)^ny.$$
\end{proof} 

A monoid scheme $X$ is called $s$-\emph{cancellative} provided for any $\q\leq \p$ the induced map $\mathcal{O}_{X,\p}^*\to \mathcal{O}_{X,\q}^*$ is injective. By Theorem \ref{scancel}  an affine monoid scheme $Spec(M)$ is $s$-cancellative if and only if $M$ is an $s$-cancellative.

\begin{Pro} If $X$ and $Y$ are $s$-cancellative monoid schemes, then $X\times Y$ is again $s$-cancellative.
\end{Pro}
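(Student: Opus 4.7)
The plan is to reduce to the affine case using the local nature of $s$-cancellativity, identify the stalks of $\mathcal{O}^*_{X\times Y}$ as direct sums of the stalks of $\mathcal{O}^*_X$ and $\mathcal{O}^*_Y$, and then observe that a direct sum of injections is an injection.

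First I would note that the $s$-cancellativity condition only involves pairs of comparable points, and that every comparable pair $\q\le\p$ in $X\times Y$ is contained in a common affine open of product form $U\times V$, with $U=\mathrm{Spec}(M)\subseteq X$ and $V=\mathrm{Spec}(N)\subseteq Y$ affine. Indeed, since open sets in a monoid scheme are lower sets in the specialization order, any affine open neighbourhood of $\p$ automatically contains $\q$, and one may shrink to a product of affine opens. This reduces the problem to the case $X=\mathrm{Spec}(M)$, $Y=\mathrm{Spec}(N)$.

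Next I would use the standard fact that in the category of monoid schemes $\mathrm{Spec}(M)\times\mathrm{Spec}(N)=\mathrm{Spec}(M\oplus N)$, where $M\oplus N$ denotes the direct sum of commutative monoids with componentwise multiplication. A short direct check shows that the prime ideals of $M\oplus N$ are precisely the subsets $(\p\oplus N)\cup(M\oplus\q)$ with $\p\in\mathrm{Spec}(M)$ and $\q\in\mathrm{Spec}(N)$, that this parametrization yields an isomorphism of posets when the right-hand side carries the product order, and that localization respects direct sums, so that the stalk at $(\p,\q)$ is $M_\p\oplus N_\q$. Passing to units one obtains
$$\mathcal{O}^*_{X\times Y,(\p,\q)}=M_\p^*\oplus N_\q^*.$$

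Given finally a specialization $(\q_1,\q_2)\le(\p_1,\p_2)$ in $\mathrm{Spec}(M\oplus N)$, the induced transition map on units factors as the direct sum of the two component maps $M_{\p_1}^*\to M_{\q_1}^*$ and $N_{\p_2}^*\to N_{\q_2}^*$. Both are injective by the $s$-cancellativity of $X$ and $Y$, so their direct sum is injective, which is the required conclusion. The main obstacle, such as it is, is the bookkeeping in the second step: identifying the product monoid scheme and its localizations carefully enough to guarantee that the transition map on units truly decomposes as the direct sum of the expected component maps. Everything else is then formal.
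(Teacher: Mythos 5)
Your proof is correct and takes essentially the same route as the paper's: both identify the stalk of the product at $(\p,\q)$ as $M_\p\times N_\q$, observe that the transition maps decompose componentwise, and conclude that a product of injections is an injection. Your version is just more detailed (and slightly more careful, since you phrase the injectivity at the level of unit groups, which is what the definition of $s$-cancellative scheme actually requires, whereas the paper's proof speaks loosely of injections of the full stalk monoids).
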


\begin{proof} The proof of this is actually trivial. Since the stalks respect the product, i.e. on the point $(\p,\q)$ we have $M_{\p}\times M_{\q}$, where $M_{\p}$ and $M_{\q}$ are the monoids standing on the stalks $\p\in X$ and $\q\in Y$ respectively, it is clear that if every morphism $M_{\p}\rightarrow M_{\p'}$ and $M_{\q}\rightarrow M_{\q'}$ coming from inclusions $\p'\subset\p\in X$ and $\q'\subset\q\in Y$ are injections, then the morphism $M_{\p}\times M_{\q}\rightarrow M_{\p'}\times M_{\q'}$ is an injection as well. With the above proposition, the statement is proven.
\end{proof}

Next we will generalise the notion of regular elements. We call an element $a\in M$ $s$-\emph{regular}, provided for any elements $a,u,v$ with $a^mu=a^mv$ for any $m\in\mathbb{N}$, we have $u(uv)^n=v(uv)^n$ for some $n\in\mathbb{N}$. Denote the set of $s$-regular elements by ${\sf S}(M)$.

\begin{Le}\label{sregloc} For any element $f\in M$ the localization homomorphism $M\to M_f$ sends $s$-regular elements to $s$-regular elements.  In particular, if $M$ is a finitely generated monoid, then the same is true also for the localization homomorphism $M\to M_\p$, for any prime ideal $\p$.
\end{Le}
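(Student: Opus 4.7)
The plan is to establish $s$-regularity of the image of $a\in M$ in $M_f$ by a direct computation with fractions, and then to reduce the prime-ideal case to this single-element localisation. Let $a\in M$ be $s$-regular; since $f$ is a unit in $M_f$, any element of the form $a/f^s$ differs from $a/1$ by a unit of $M_f$, and multiplication by a unit clearly preserves $s$-regularity (the assumption $(wa)^m u=(wa)^m v$ becomes $a^m u=a^m v$ after cancelling the unit $w^m$). Thus it suffices to show that $a/1$ is $s$-regular in $M_f$.

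So assume $(a/1)^m(u/f^i)=(a/1)^m(v/f^j)$ in $M_f$. By construction of $M_f$ there is an integer $k\geq 0$ with
$$a^m(f^{k+j}u)\;=\;a^m(f^{k+i}v)\qquad\text{in } M.$$
Applying $s$-regularity of $a$ to the pair $U:=f^{k+j}u$, $V:=f^{k+i}v$ yields some $n\in\N$ with $U(UV)^n=V(UV)^n$, which when expanded reads
$$f^{k+j+n(2k+i+j)}\,u(uv)^n\;=\;f^{k+i+n(2k+i+j)}\,v(uv)^n$$
in $M$. Pass now to $M_f$, where every power of $f$ is invertible, cancel the common factor $f^{k+n(2k+i+j)}$, and redistribute the remaining powers of $f$ to match the denominators $f^i$, $f^j$ and $f^{i+j}$ appearing in $u/f^i$, $v/f^j$ and $(uv)/f^{i+j}$. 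The outcome is precisely
$$(u/f^i)\bigl((uv)/f^{i+j}\bigr)^n\;=\;(v/f^j)\bigl((uv)/f^{i+j}\bigr)^n,$$
which is the required $s$-regularity relation for $u/f^i$ and $v/f^j$, so $a/1$ is $s$-regular in $M_f$.

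For the ``in particular'' clause, assume $M$ is generated by $g_1,\dots,g_r$ and let $\p\in Spec(M)$. Primeness of $\p$ implies that a product of generators lies outside $\p$ if and only if each factor does, hence the submonoid $M\setminus\p$ is generated by $\{g_i\mid g_i\notin\p\}$. Localising at $M\setminus\p$ therefore amounts to inverting the single element $f:=\prod_{g_i\notin\p}g_i$, so $M_\p=M_f$ and the claim for $M\to M_\p$ follows from the first part.

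The only real obstacle is the bookkeeping of exponents of $f$ when transporting the identity supplied by $s$-regularity of $a$ from $M$ to the precise form demanded in $M_f$; there is no conceptual difficulty beyond the standard fact that $x=y$ in $M_f$ is equivalent to $f^Nx=f^Ny$ in $M$ for some $N\geq 0$.
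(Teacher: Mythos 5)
Your proof is correct and follows essentially the same route as the paper: the first part is the same fraction computation (clear denominators, apply $s$-regularity of $a$ to $f^{k+j}u$ and $f^{k+i}v$, then invert the powers of $f$ back in $M_f$). For the second part the paper instead observes that $\p$ has a smallest open neighbourhood $D(f)$ in the finite space $Spec(M)$ so that $M_\p=M_f$; your explicit choice $f=\prod_{g_i\notin\p}g_i$ produces exactly that $f$, so the two arguments coincide.
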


\begin{proof} Let $a\in S(M)$ be a regular element of $M$. 
Assume we have $$a^nu'=a^nv'$$ in $M_f$.  If $u'=\frac{u}{f^i}$ and $v'=\frac{v}{f^j}$, $u,v\in M$, then  we can rewrite $$a^n\frac{u}{f^i}=a^n\frac{v}{f^j}.$$ Hence there exists a natural number $k\in\mathbb{N}$ such that $f^ka^nuf^j=f^ka^nvf^i$ in $M$. 
Since $a$ is semi-regular, there exists a natural number $m\in \mathbb{N}$ such that $$(f^{2k+j+i}uv)^mf^kuf^j=(f^{2k+j+i}uv)^mf^kvf^i.$$ 
We obtain
$$f^{k(2m+1)}(uv)^{m}uf^{m(i+j)}f^j=f^{k(2m+1)}(uv)^{m}vf^{m(i+j)}f^i.$$
Hence
$$\frac{(uv)^{m}u}{f^{m(i+j)}f^i}=\frac{(uv)^{m}v}{f^{m(i+j)}f^j}.$$
Or, equivalently $(u'v')^mu'=(u'v')^mv'$. Thus $a$ is semi-regular in $M_f$. To see the second part, observe that if $M$ is finitely generated, then $X=Spec(M)$ is a finite $T_0$-space. Hence any point $\p\in Spec(M)$ has a smallest open neighbourhood, say $D(f)$. Then $$M_\p={\mathcal O}_{X,\p}={\mathcal O}_{X}(D(f))=M_f$$
and the result follows.
\end{proof}

It follows that $\p\mapsto {\sf S}(M_\p)$ gives rise to a contravariant functor on $Spec(M)$.

\begin{Pro}\label{sregular} ${\mathsf S(M)}\subset M$ is a multiplicative subset containing $1$, and the map $M^*\rightarrow M_{{\mathsf S(M)}}^*$ is injective. 
\end{Pro}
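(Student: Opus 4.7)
The plan is to verify in turn the three assertions of the proposition: $1\in {\sf S}(M)$, closure of ${\sf S}(M)$ under multiplication, and injectivity of $M^*\to M_{{\sf S}(M)}^*$. That $1\in {\sf S}(M)$ is immediate from the definition, since $1^m u=1^m v$ reduces to $u=v$, so $u(uv)^n=v(uv)^n$ holds trivially with $n=0$.

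For multiplicative closure, I would let $a,b\in {\sf S}(M)$ and suppose $(ab)^m u=(ab)^m v$. The idea is to peel off $a$ and $b$ one at a time. Rewriting as $a^m(b^m u)=a^m(b^m v)$ and applying the $s$-regularity of $a$ to the pair $u'=b^m u$, $v'=b^m v$, then using $u'v'=b^{2m}uv$, yields an identity of the form $b^{M_1}(uv)^{n_1}u=b^{M_1}(uv)^{n_1}v$ with $M_1=2mn_1+m$. Feeding the new pair $X=(uv)^{n_1}u$, $Y=(uv)^{n_1}v$ into the $s$-regularity of $b$ and noting that $XY$ telescopes to $(uv)^{2n_1+1}$, the resulting equation $(XY)^{n_2}X=(XY)^{n_2}Y$ collapses to $(uv)^N u=(uv)^N v$ with $N=n_2(2n_1+1)+n_1$. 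Hence $ab\in {\sf S}(M)$.

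Finally, for the injectivity, I would suppose $x,y\in M^*$ have the same image in $M_{{\sf S}(M)}$. By the definition of the localisation there exists $s\in {\sf S}(M)$ with $sx=sy$, and the $s$-regularity of $s$ then produces $n\in\mathbb{N}$ with $(xy)^n x=(xy)^n y$. Since $x,y\in M^*$, the element $(xy)^n$ is invertible in $M$, so cancelling it gives $x=y$. The only step with any genuine bookkeeping is the multiplicative closure, which amounts to two successive applications of the defining implication; the key simplification is the telescoping of $(uv)^{n_1}u\cdot(uv)^{n_1}v$ to a single power of $uv$.
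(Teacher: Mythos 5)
Your proof is correct and follows essentially the same route as the paper: peel off $a$ and then $b$ via two successive applications of the defining implication, with the same telescoping of $(uv)^{n_1}u\cdot (uv)^{n_1}v$ into a single power of $uv$, and the same invertibility-of-$(xy)^n$ argument for injectivity. No gaps.
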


\begin{proof} Let $a,b\in S(M)$. Assume that $(ab)^mu=(ab)^mv$ for some $m\in\mathbb{N}$ and $u,v\in M$. Since $a\in S(M)$, we have $a^m(b^mu)=a^m(b^mv)$ imply that \begin{eqnarray*} (b^mub^mv)^n(b^mu)&=&(b^mub^mv)^n(b^mv) \\ 
		          b^{m(2n+1)}(uv)^nu&=&b^{m(2n+1)}(uv)^nv.
\end{eqnarray*}
Since $b\in S(M)$, we have 
\begin{eqnarray*} ((uv)^nu(uv)^nv)^{n'}(uv)^nu&=&((uv)^nu(uv)^nv)^{n'}(uv)^nv \\ 
		         (u^{(2n+1)n'}u^n)(v^{(2n+1)n'}v^n)u&=&(u^{(2n+1)n'}u^n)(v^{(2n+1)n'}v^n)v \\ 
		         (uv)^{(2n+1)n'+n}u&=&(uv)^{(2n+1)n'+n}v
\end{eqnarray*}
Hence $ab\in S(M)$. To see that localising with the $s$-regular elements of a monoid induces an injection on the invertible elements, we only have to prove that $\frac{m}{1}=\frac{m'}{1}$ in $M_{S(M)}$ if and only if $m=m'$ in $M^*$. But this is easy to see, since $\frac{m}{1}=\frac{m'}{1}$ is equivalent to saying that there exists an element $a\in {\sf S}(M)$ such that $ma=m'a$. Since $a$ is $s$-regular, we have $(mm')^nm=(mm')^nm'$. But since $m,m'\in M^*$, it implies that $mm'\in M^*$, hence $m=m'$, as required.
\end{proof}

It follows that  one can take the localization $M_{{\sf S}(M)}$. Moreover, by Lemma \ref{sregloc} the assignment $\p\mapsto (M_\p)_{{\sf S}(M_p)}$  gives rise to  a contravariant functor on $Spec(M)$.
The associated sheaf 
is denoted by $s\mathcal{M}_X$ and is called the sheaf of \emph{semi-meromorphic}  functions on $X$. Even though $\mathcal{O}_X\rightarrow s\mathcal{M}_X$ is no longer injective in general, $\mathcal{O}^*_X\rightarrow s\mathcal{M}^*_X$ is injective. Hence we get the short exact sequence of abelian groups: 
\begin{equation}\label{scartier} 1\rightarrow \mathcal{O}^*_X\rightarrow s\mathcal{M}^*_X\rightarrow s\mathcal{M}^*_X/\mathcal{O}^*_X\rightarrow 1.\end{equation}
We call the section of the sheaf $s\mathcal{M}^*_X/\mathcal{O}^*_X$ the $s$-divisors. The quotient of the group of the $s$-divisors by the principal divisors is denoted by $s${\sf Cl}$(X)$. 
If $M$ is $s$-cancellative any element in $M$ is $s$-regular since for every elements $a,u,v$ with $a^mu=a^mv$ we have $(uv)^nu=(uv)^nv$ as $a^m\equiv b\in M$. Hence 
$${s\mathcal M}^*_X(Spec(M))=G.$$
Here  $X=Spec(M)$ and $G$ is the Grothendieck group of $M$, i.e. the localisation of $M$ by $M$. In particular ${s\mathcal M}^*_X$ is a constant sheaf. Clearly, the  last fact is true for any $s$-cancellative  monoid scheme
$X$. By Lemma \ref{scancel} the sheaf ${\mathcal O}^*_X$ is a subsheaf of $s{\mathcal M}^*_X$. For cancellative schemes one has $s{\mathcal M}^*_X={\mathcal M}^*_X$.

In the case when $X$ is $s$-cancellative $s\mathcal{M}^*_X$ is a constant sheaf. Hence by applying the long cohomological exact sequence to the short exact sequence  (\ref{scartier}) we obtain the following fact.
\begin{Pro} One has a monomorphism $s{\sf Cl}(X)\rightarrow {\sf Pic(X)}$, which is an isomorphism if $X$ is $s$-cancellative.
\end{Pro}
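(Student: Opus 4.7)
The plan is to follow the same pattern as the proof of Proposition \ref{5.1}, but applied to the short exact sequence (\ref{scartier}) in place of (\ref{cartier}). Since all sheaves in (\ref{scartier}) are sheaves of abelian groups, we obtain an honest long exact sequence of abelian group cohomology
\begin{equation*}
1 \to \mathcal{O}_X^*(X) \to s\mathcal{M}_X^*(X) \to (s\mathcal{M}_X^*/\mathcal{O}_X^*)(X) \xrightarrow{\delta} H^1(X,\mathcal{O}_X^*) \to H^1(X, s\mathcal{M}_X^*).
\end{equation*}
By construction, the group of $s$-divisors is $(s\mathcal{M}_X^*/\mathcal{O}_X^*)(X)$ and the subgroup of principal $s$-divisors is exactly the image of $s\mathcal{M}_X^*(X)$, so the cokernel in the middle of the above sequence is $s{\sf Cl}(X)$. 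Exactness then immediately promotes $\delta$ to an injection
\begin{equation*}
s{\sf Cl}(X) \hookrightarrow H^1(X,\mathcal{O}_X^*) = {\sf Pic}(X),
\end{equation*}
which is the first assertion.

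For the second assertion, I would use the observation recorded just above the statement: when $X$ is $s$-cancellative, every element of each stalk $\mathcal{O}_{X,x}$ is $s$-regular, so $s\mathcal{M}_X^*$ is the constant sheaf associated to the Grothendieck group $G$ of sections (more precisely, constant on each connected component). Hence by Lemma \ref{covdim}(i), which gives vanishing of $H^1$ with coefficients in a constant sheaf of abelian groups, the term $H^1(X, s\mathcal{M}_X^*)$ vanishes. The monomorphism of the previous paragraph is therefore forced to be surjective, giving the claimed isomorphism.

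The main point to check carefully is the identification of $s\mathcal{M}_X^*$ with a constant sheaf: one has to verify that the localisation map $(\mathcal{O}_{X,x})^* \to (s\mathcal{M}_X)^*_x$ is the map into the Grothendieck group of $\mathcal{O}_{X,x}$ (using $s$-cancellativity and Proposition \ref{sregular}) and that, for inclusions $\q \subset \p$, the induced maps on these Grothendieck groups are isomorphisms locally. This in turn uses exactly the characterisation of $s$-cancellativity via injectivity of $(M_\p)^* \to (M_\q)^*$ from Theorem \ref{scancel}, together with the fact that both sides embed into the common total Grothendieck group $G$. Once this is in place, Lemma \ref{covdim}(i) applies as above, and the proof is complete.
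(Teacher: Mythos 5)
Your proposal is correct and follows essentially the same route as the paper: the long exact cohomology sequence of (\ref{scartier}) gives the monomorphism $s{\sf Cl}(X)\hookrightarrow H^1(X,\mathcal{O}_X^*)={\sf Pic}(X)$, and for $s$-cancellative $X$ the sheaf $s\mathcal{M}_X^*$ is constant, so $H^1(X,s\mathcal{M}_X^*)$ vanishes and the map is an isomorphism. The only point worth keeping in mind is that the vanishing of $H^1$ with constant coefficients (Lemma \ref{covdim}) is stated for separated monoid schemes of finite type, a standing hypothesis that should be made explicit.
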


\begin{Rem} Note that neither in the definition of $s$-regular elements, nor in Proposition \ref{sregular} did we use the fact that we were working with monoids and not with rings. Indeed this generalisation as well as that of the Cartier divisors holds for rings as well.
\end{Rem}

\section{Vanishing of $H^i(X,{\mathcal O}^*_X)$, $i\geq 2$}\label{sec6}
A monoid scheme of finite type is called \emph{smooth} provided there exist an open covering with affine monoid schemes of the type $Spec(M)$, where $M=\mathbb{N}^r\times \mathbb{Z}^s$. It is well known that $H^i(X,{\mathcal O}^*_X),\ i \geq 2$ vanishes for smooth (ring)-schemes. The aim of this section is to prove that the analogue holds for monoid schemes. Actually we will prove a much more general result, see Theorem \ref{4.5} below, namely, we will introduce a more general notion called $s$-smoothness and we prove that the vanishing result is true for  $s$-smooth monoid schemes.

 \subsection{$s$-flasque sheaves and functors} A sheaf $F$ of abelian groups on a topological space $X$ is called $s$-flasque provided for any open subset $U$ the restriction map $F(X)\to F(U)$ has a section. By Proposition \ref{sh=fun} in our circumstance sheaves can be replaced by functors. So we will work with functors instead of sheaves. 
 
 Let $P$ be a poset. Recall that a subset $X\subset P$ is called \emph{open} provided  for any $x\in X$ and $y\leq x$, it follows that $y\in X$.  Let $F$ be a contravariant functor on $P$ with values in the category of abelian groups, $F$ is called \emph{$s$-flasque} provided for all open subsets $X\subset Y$ the induced map
$$\lim_{y\in Y}F_y\to \lim_{x\in X}F_x$$
is a split epimorphism. It is obvious that such functors correspond exactly to the $s$-flasque sheaves under the equivalence constructed in Proposition \ref{sh=fun}.

Let $A^e=(A^e_x)_{x\in P}$ be a collection of abelian groups indexed by $P$.
A functor \emph{generated by the collection} $A^e$ is a contravariant functor $A$ on $P$ defined by
$$A_x=\prod_{y\leq x}A^e_y$$
and for $x\leq y$, the map $A_y\to A_x$ 
is the natural projection. 

\begin{Le}\label{val} Let $X$ be a $\mathfrak{P}$-topological space. Let $P$ be the poset associated to $X$ as in Prop. \ref{sh=fun}. A contravariant functor $F$ defined on $P$ is $s$-flasque, if and only if
$F$ is isomorphic to a functor generated by a collection of abelian groups $A^e=(A^e_x)_{x\in P}$.
\end{Le}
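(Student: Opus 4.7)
My plan is to prove the two implications separately. For the direction \emph{generated $\Longrightarrow$ $s$-flasque}, suppose $F$ is the functor $A$ generated by $(A^e_x)_{x\in P}$. For any open (i.e.\ downward closed) subset $U\subset P$, a compatible family in $\lim_{y\in U} A_y$ assigns to every $y\in U$ a tuple $(a_y^z)_{z\leq y}$ with $a_y^z = a_{y'}^z$ whenever $z\leq y'\leq y$; such a family is therefore specified by, and freely determined by, one element of $A^e_z$ for each $z\in U$ (using that $U$ is downward closed). Hence $\lim_{y\in U} A_y \cong \prod_{y\in U} A^e_y$ canonically, and for open subsets $X\subset Y$ the restriction is the canonical product projection $\prod_{y\in Y} A^e_y \to \prod_{x\in X} A^e_x$, which splits by extension-by-zero.

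For the converse, assume $F$ is $s$-flasque. For each $x\in P$ consider the opens $U_x = \{y : y\leq x\}$ and $U^\circ_x = \{y : y < x\}$. Since $x$ is the maximum of $U_x$, one has $\lim_{U_x} F = F_x$, so $s$-flasqueness supplies a section $s_x$ of the restriction $\rho_x\colon F_x \to \lim_{U^\circ_x} F$. Set $A^e_x := \ker \rho_x$, so that $s_x$ gives a splitting $F_x \cong A^e_x \oplus \lim_{U^\circ_x} F$.

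I would then construct isomorphisms $\phi_x\colon F_x \to \prod_{y\leq x} A^e_y$, compatible with the restriction maps of $F$ and the canonical product projections, by well-founded induction on the height of $x$. The base case, $x$ minimal, is $F_x = A^e_x$ since $U^\circ_x = \emptyset$. In the inductive step, the compatibility assumed for $y<x$ identifies $\lim_{y \in U^\circ_x} F$ with $\lim_{y<x} \prod_{z\leq y} A^e_z = \prod_{y<x} A^e_y$; combining with the split decomposition $F_x \cong A^e_x \oplus \lim_{U^\circ_x} F$ produces $\phi_x\colon F_x \cong A^e_x \times \prod_{y<x} A^e_y = \prod_{y\leq x} A^e_y$. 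Compatibility with the restriction $F_x \to F_{y_0}$ for $y_0 < x$ follows because this map factors as $F_x \to \lim_{U^\circ_x} F \to F_{y_0}$, the second arrow being the canonical projection from the limit, and this matches $\prod_{z\leq x} A^e_z \to \prod_{z\leq y_0} A^e_z$ by the inductive hypothesis.

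The substance of the argument is this inductive construction; it is largely bookkeeping, and the only mild subtlety is that the sections $s_x$ may be chosen independently at each $x$, with no coherence needed beyond the inductive identification of $\lim_{U^\circ_x} F$. For finite posets---the case relevant to finite-type monoid schemes---the induction on height is immediate; for a general $\mathfrak{P}$-space one would either perform transfinite induction on the rank of $x$ or apply Zorn's lemma to the poset of partial compatible systems $(\phi_x)_{x \in D}$ defined on downward-closed subsets $D \subset P$.
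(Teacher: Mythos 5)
Your proof is correct and follows essentially the same route as the paper's: both directions rest on the identification $\lim_{y\in U}A_y\cong\prod_{y\in U}A^e_y$ for a generated functor, and the converse defines $A^e_x=\ker\bigl(F_x\to\lim_{y<x}F_y\bigr)$, splits off this kernel via the section provided by $s$-flasqueness, and builds the isomorphism by induction on height. Your single well-founded induction carrying compatibility along (versus the paper's outer induction on Krull dimension plus inner induction on height) is only a cosmetic reorganization, and your closing remark about the infinite case addresses a limitation the paper's own proof shares.
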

\begin{proof} Assume $F$ is a functor generated by a collection $A^e$, then for any open subset $U\subset X$, one has 
\begin{equation}\label{62iso}\lim_{u\in U}F_u \cong \prod_{x\in U}A^e_x\end{equation} 
and hence $F$ is $s$-flasque. 

The converse will be proven by induction on the Krull dimension $dim(X)$ of $X$. The case $dim(X)=0$ being trivial. Assume $F$ is $s$-flasque. Let us define the collection $A^e$ by $$A^e_x=\ker(F_x\xto{f_x} \lim_{y<x}F_y).$$
We claim there is an isomorphism $\theta:F \to A$. To construct $\theta$ we choose at once 
sections $s_x$ of the maps $f_x:F_x\to \lim\limits_{y<x}F_y$ for all $x\in X$. Next, we define 
$$\theta_x: F_x\to A_x$$ by induction on the hight of $x$. If hight of $x$ is zero, then
$x$ is a minimal element of $X$, thus in this case $F_x=A^e_x=A_x$ and we can take $\theta_x$ to be the identity map.  Assume $\theta_y$ is defined for all $y$, for which $ht(y)<ht(x)$. Let $Y=\{y\in X| y<x\}$. Then the Krull dimension of $Y$ is strictly smaller than the Krull dimension of $X$, thus by the induction assumption the functor $F$ restricted on $Y$ is isomorphic to a functor generated by a collection of abelian groups $(A^e_y)_{y<x}$ and hence by the isomorphism (\ref{62iso}) one has an isomorphism 
$$\lim_{y<x}F_y\to\prod_{y<x}A^e_y.$$
 Using $s_x$, we now can define $\theta_x$ as the composite of isomorphisms 
$$F_x\to A^e_x\times \lim_{y<x}F_y \to A^e_x\times \prod_{y<x}F_y=\prod_{y\leq x}F_y.$$
\end{proof}
This result has the following immediate consequence.

\begin{Th}\label{4.2} Let $F$ be a sheaf on a $\mathfrak{P}$-topological space $X$. Then $F$ is $s$-flasque if and only if $F$ is locally $s$-flasque.
\end{Th}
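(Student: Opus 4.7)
The plan is to reduce both directions to Lemma \ref{val}, which characterises $s$-flasque functors on $\mathfrak{P}$-spaces as those isomorphic to a functor generated by a collection of abelian groups.

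The forward direction is immediate: if $F$ is $s$-flasque on $X$ and $U\subset X$ is any open subset, any open pair $V\subset W$ contained in $U$ is also an open pair in $X$, so the split epi condition for $\lim_{W}F\to\lim_{V}F$ is inherited. Thus $F|_{U}$ is $s$-flasque for every open $U$, which in particular gives local $s$-flasqueness.

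For the converse, assume $F$ is locally $s$-flasque. My plan is to exhibit $F$ as a functor generated by the canonical collection $A^e_x := \ker(f_x\colon F_x\to \lim_{y<x}F_y)$, and then invoke Lemma \ref{val}. The crucial point that makes local data useful globally is a special feature of $\mathfrak{P}$-topologies: the smallest open neighborhood of a point $x$ is the principal down-set $\{y\leq x\}$. Consequently, if $U_x$ is any open neighborhood of $x$ on which $F$ is $s$-flasque, then automatically $\{y\leq x\}\subseteq U_x$, and the open pair $\{y<x\}\subset\{y\leq x\}$ lies inside $U_x$. Applying the $s$-flasque condition for $F|_{U_x}$ to this pair produces a section $s_x$ of $f_x$. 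Hence local $s$-flasqueness supplies a section of $f_x$ at every point $x$ of $X$.

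With these sections in hand I would run the inductive construction from the converse half of Lemma \ref{val}. Define $\theta_x\colon F_x\to A_x$ by induction on $ht(x)$: if $ht(x)=0$, set $\theta_x=\id$; otherwise take $\theta_x$ to be the composite
$$F_x \xrightarrow{\sim} A^e_x \oplus \lim_{y<x}F_y \xrightarrow{\id\oplus\Theta_{<x}} A^e_x\oplus \prod_{y<x}A^e_y = A_x,$$
where the first isomorphism splits $f_x$ via $s_x$, and $\Theta_{<x}$ is the iso on limits induced by the previously built $\theta_y$ for $y<x$ together with the natural identification $\lim_{y<x}A_y = \prod_{y<x} A^e_y$. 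The main (but mild) obstacle is to verify naturality of $\theta$, since the sections $s_x$ are chosen independently at each point. Naturality is automatic, however: the $\lim_{y<x}F_y$-component of $\theta_x$ is assembled entirely from the $\theta_y$ with $y<x$, so compatibility with each transition map $F_x\to F_z$ (for $z\leq x$) reduces immediately to the inductive hypothesis for $\theta_z$. This yields the required isomorphism $F\cong A$, and Lemma \ref{val} then delivers $s$-flasqueness of $F$.
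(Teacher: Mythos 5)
Your argument is correct and follows the same route the paper intends: the paper states Theorem \ref{4.2} as an immediate consequence of Lemma \ref{val}, and your proof simply makes explicit why — the forward direction is inherited by open restriction, and for the converse the only input to the inductive construction in Lemma \ref{val} is a section of $f_x\colon F_x\to \lim_{y<x}F_y$ at each point, which local $s$-flasqueness supplies because the principal down-set $\{y\leq x\}$ is the smallest open neighbourhood of $x$ in a $\mathfrak{P}$-space. Your additional naturality check is a detail the paper leaves implicit, but it is consistent with its construction.
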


Recall that if $P$ and $Q$ are two posets, then $P\times Q$ is also a poset with $(p_1,q_1)\leq (p_2,q_2)$ if and only if $p_1\leq p_2$ and $q_1\leq q_2$. 
\begin{Le}\label{4.3} Let $P$ (resp.  $Q$) be a poset with the least element denoted by $e$ (resp. $f$).  Let $A$ (rep. $B$) be a contravariant functor defined on $P$ (rep. on $Q$). Define $A\times B$ to be a contravariant functor defined on $P\times Q$ by
$$(A\times B)_{(p,q)}=A_p\times B_q.$$
If $A$ and $B$ are $s$-flasque, then $A\times B$ is also $s$-flasque.
\end{Le}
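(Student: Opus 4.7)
The plan is to apply Lemma \ref{val} in both directions. Using the hypothesis, fix generating collections $(A^e_p)_{p\in P}$ and $(B^e_q)_{q\in Q}$ with
$$A_p = \prod_{p'\leq p}A^e_{p'}, \qquad B_q = \prod_{q'\leq q}B^e_{q'},$$
the restriction maps being the natural projections; then exhibit a generating collection for $A\times B$ on $P\times Q$, and finally invoke the lemma in the converse direction.

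First I would pull $A$ and $B$ back to $P\times Q$ along the two projections, writing $\tilde A_{(p,q)} := A_p$ and $\tilde B_{(p,q)} := B_q$, so that $A\times B = \tilde A \oplus \tilde B$ pointwise. Because finite biproducts of abelian groups commute with products, the direct sum of two functors generated by collections is itself generated by the pointwise direct sum of those collections, so it suffices to generate $\tilde A$ and $\tilde B$ separately. Here is where the hypothesis on least elements is essential: define $\tilde A^e_{(p',q')} := A^e_{p'}$ when $q' = f$ and $0$ otherwise, and symmetrically $\tilde B^e_{(p',q')} := B^e_{q'}$ when $p' = e$ and $0$ otherwise. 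Because $f$ is the minimum of $Q$, the condition $(p',f)\leq (p,q)$ reduces to $p'\leq p$, so
$$\prod_{(p',q')\leq (p,q)} \tilde A^e_{(p',q')} = \prod_{p'\leq p} A^e_{p'} = A_p = \tilde A_{(p,q)},$$
and the analogous identity holds for $\tilde B$.

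What remains is to check that the restriction maps of the generated functor agree with those of $\tilde A$: for $(p_1,q_1)\leq (p_2,q_2)$ both are the projection onto the subproduct indexed by $\{p'\leq p_1\}\subseteq \{p'\leq p_2\}$. Hence $\tilde A^e \oplus \tilde B^e$ is a generating collection for $A\times B$, and Lemma \ref{val} delivers $s$-flasqueness. I do not expect a substantive obstacle; the only care needed is the bookkeeping at $(e,f)$, where the two collections overlap and sum to $A^e_e\oplus B^e_f$. The essential use of the assumption that $P$ and $Q$ have least elements is precisely to allow each of the two generating collections to be concentrated on a single fibre of the corresponding projection $P\times Q\to P$ (resp.\ $\to Q$).
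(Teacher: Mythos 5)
Your proof is correct and is essentially the paper's own argument: both apply Lemma \ref{val} in both directions and build the generating collection for $A\times B$ by concentrating $A^e$ on the fibre $P\times\{f\}$ and $B^e$ on $\{e\}\times Q$ (with the two overlapping at $(e,f)$ to give $A^e_e\times B^e_f$), which is exactly the collection $C^e$ the paper writes down directly. Your intermediate step of splitting $A\times B$ into the two pulled-back summands $\tilde A$ and $\tilde B$ is just a slightly more explicit bookkeeping of the same computation.
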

\begin{proof} By assumption $A$ is generated by $(A_x^e)_{x\in P}$ and $B$ is generated 
by $(B_y^e)_{y\in Q}$. Define
$$C^e_{(x,y)}=\begin{cases} A_x^e\times B_y^e & x=e,y=f \\ 
					         A_x^e & x\neq e,y=f \\ 
					         B_y^e & x=e,y\neq f \\ 
					         0 & x\neq e,y\neq f. \end{cases}$$
One sees that
$$(A\times B)_{(p,q)}=(\prod_{x\leq p}A_x^e)\times(\prod_{y\leq q}B_y^e)=\prod_{(x,y)\leq(p,q)}C_{x,y}^e$$
and the result follows.	
\end{proof}

\subsection{$s$-smooth monoid schemes} 

An $s$-cancellative monoid scheme of finite type is called \emph{$s$-smooth} provided the sheaf $s{\mathcal M}^*_X /{\mathcal O}^*_X$ is $s$-flasque. An $s$-cancellative monoid $M$ is $s$-smooth provided $X=spec(M)$ is $s$-smooth.

\begin{Pro} i) A monoid scheme being $s$-smooth is a local property.

ii) If $X$ and $Y$ are $s$-smooth monoid schemes, then $X\times Y$ is also $s$-smooth.

iii) Any smooth monoid scheme is $s$-smooth.

\end{Pro}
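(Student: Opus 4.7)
The plan is to treat the three parts in sequence: (i) reduces directly to Theorem~\ref{4.2}, (ii) reduces to Lemma~\ref{4.3} after a stalk computation, and (iii) cascades down via (i) and (ii) to the baby cases $Spec(\mathbb{N})$ and $Spec(\mathbb{Z})$. For (i), $s$-cancellativity is local by definition, being a condition on the stalk maps $\mathcal{O}^*_{X,\mathfrak{p}}\to\mathcal{O}^*_{X,\mathfrak{q}}$, and the $s$-flasqueness of $s\mathcal{M}^*_X/\mathcal{O}^*_X$ is local by Theorem~\ref{4.2}; combining the two, $s$-smoothness is local.

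For (ii), $X\times Y$ is $s$-cancellative by the preceding proposition. By (i) it suffices to verify $s$-flasqueness of $s\mathcal{M}^*_{X\times Y}/\mathcal{O}^*_{X\times Y}$ on an affine cover, so write locally $X=Spec(M)$, $Y=Spec(N)$. Since the coproduct in commutative monoids coincides with the Cartesian product, $X\times Y=Spec(M\times N)$; direct inspection identifies the underlying poset with $Spec(M)\times Spec(N)$ and the stalk of $\mathcal{O}_{X\times Y}$ at $(\mathfrak{p},\mathfrak{q})$ with $M_{\mathfrak{p}}\times N_{\mathfrak{q}}$. Using that for $s$-cancellative monoids $s\mathcal{M}^*$ is constant with value the Grothendieck group, together with $G(M\times N)=G(M)\times G(N)$, the stalk of the quotient at $(\mathfrak{p},\mathfrak{q})$ becomes canonically $(s\mathcal{M}^*_X/\mathcal{O}^*_X)_{\mathfrak{p}}\times(s\mathcal{M}^*_Y/\mathcal{O}^*_Y)_{\mathfrak{q}}$. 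Under the equivalence of Proposition~\ref{sh=fun} this is precisely the product functor of Lemma~\ref{4.3} (whose hypothesis on least elements is satisfied, since affine spectra have generic points), and the lemma yields the desired $s$-flasqueness.

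For (iii), a smooth monoid scheme is by definition locally isomorphic to $Spec(\mathbb{N}^r\times\mathbb{Z}^s)$, so by (i) we reduce to this case. Since $\mathbb{N}^r\times\mathbb{Z}^s$ is the coproduct in commutative monoids of $r$ copies of $\mathbb{N}$ and $s$ copies of $\mathbb{Z}$, its spectrum is an iterated product in monoid schemes, so (ii) reduces further to $Spec(\mathbb{N})$ and $Spec(\mathbb{Z})$. The latter is a single point and trivially $s$-smooth. For $Spec(\mathbb{N})$ the underlying space has only a generic and a closed point; the stalks of $s\mathcal{M}^*/\mathcal{O}^*$ compute as $\mathbb{Z}/\mathbb{Z}=0$ and $\mathbb{Z}/\{1\}=\mathbb{Z}$ respectively, and by Lemma~\ref{val} the corresponding functor is $s$-flasque, being generated by the collection $A^e_{\mathrm{gen}}=0$, $A^e_{\mathrm{cl}}=\mathbb{Z}$.

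The main technical obstacle is the stalk identification in (ii): verifying that the product of monoid schemes behaves on stalks as a product of local monoids, and that $s\mathcal{M}^*$ respects this product (ultimately because Grothendieck groups commute with finite products). Once this naturality is in place, Lemmas~\ref{4.3} and~\ref{val} close the argument.
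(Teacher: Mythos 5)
Your proposal follows the paper's own argument step for step: part (i) via Theorem~\ref{4.2}, part (ii) by reducing to the affine case $Spec(M_1\times M_2)$ and invoking Lemma~\ref{4.3}, and part (iii) by reducing to the monoids $\mathbb{N}$ and $\mathbb{Z}$. The only difference is that you spell out the stalk identification in (ii) and the explicit computation for $Spec(\mathbb{N})$, which the paper leaves as ``trivial''; both are correct.
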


\begin{proof} i) This is  a direct consequence of Theorem \ref{4.2}.

ii) According to \cite{cort}  $X\times Y$ locally looks like $spec(M_1\times M_2)$ for monoids $M_1$ and $M_2$. So, by (i) we need to consider only the affine case. In this case  the statement follows from Lemma \ref{4.3}.

iii) By previous cases we only need to show that $\mathbb{Z}$ and $\mathbb{N}$ are $s$-smooth monoids. Both being trivial.
\end{proof}

\begin{Pro}\label{4.5} If  $X$ is an $s$-smooth monoid scheme, then for all $i\geq 2$ one has
$$H^i(X,{\mathcal O}^*_X)=0.$$
\end{Pro}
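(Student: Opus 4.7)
The plan is to feed the defining short exact sequence \eqref{scartier}
$$1\to \mathcal{O}^*_X\to s\mathcal{M}^*_X\to s\mathcal{M}^*_X/\mathcal{O}^*_X\to 1$$
into the long exact cohomology sequence, which for any $i\geq 2$ yields the piece
$$H^{i-1}(X,s\mathcal{M}^*_X/\mathcal{O}^*_X)\to H^i(X,\mathcal{O}^*_X)\to H^i(X,s\mathcal{M}^*_X).$$
It will then be enough to show that both flanking groups vanish in the stated range.

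The right-hand group is cheap. Since $X$ is $s$-cancellative, every element of each stalk $\mathcal{O}_{X,x}$ is $s$-regular, so $(s\mathcal{M}^*_X)_x$ is the Grothendieck group of $\mathcal{O}_{X,x}$; and because localising a monoid along a specialisation $x'\leq x$ leaves the Grothendieck group unchanged, $s\mathcal{M}^*_X$ is a constant sheaf on each connected component of $X$. Lemma \ref{covdim}(i), applied componentwise, then gives $H^i(X,s\mathcal{M}^*_X)=0$ for all $i\geq 1$.

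The left-hand group is the crux. By $s$-smoothness the quotient $s\mathcal{M}^*_X/\mathcal{O}^*_X$ is $s$-flasque, so the statement reduces to the following general claim, which is really the heart of the matter: \emph{every $s$-flasque sheaf $F$ of abelian groups on $X$ satisfies $H^i(X,F)=0$ for $i\geq 1$.} To prove it, I would invoke Lemma \ref{val} and present $F$ as the functor generated by a collection $(A^e_y)_{y\in X}$ of abelian groups, so that $F_x=\prod_{y\leq x}A^e_y$ with restrictions equal to projections. This writes $F$ as a finite product of sheaves $F\cong\prod_{y\in X}F^{(y)}$, where $F^{(y)}_x=A^e_y$ whenever $y\leq x$ and $F^{(y)}_x=0$ otherwise. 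Since the cochain complex $C^n(X,-)=\prod_{x_0\leq\cdots\leq x_n}(-)_{x_n}$ of Proposition \ref{sh=fun}(iii) is a finite product of stalks at each level, cohomology commutes with the finite product decomposition, and it suffices to treat each $F^{(y)}$ individually.

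For a fixed $y$, let $Z=\overline{\{y\}}=\{x\in X:y\leq x\}$, a closed subset whose smallest element is $y$, and let $\iota:Z\hookrightarrow X$ be the inclusion. A direct check on sections (over an open down-set $U$, the limit defining $F^{(y)}(U)$ is $A^e_y$ when $y\in U$ and $0$ otherwise) identifies $F^{(y)}$ with $\iota_*\underline{A^e_y}$, where $\underline{A^e_y}$ is the constant sheaf on $Z$. Because $\iota$ is a closed embedding, $\iota_*$ is exact and preserves injectives (its stalks are $F$ on $Z$ and $0$ off $Z$), so $H^i(X,F^{(y)})=H^i(Z,\underline{A^e_y})$; and because $y$ is the minimum of $Z$ the nerve of $Z$ is a cone and hence contractible, so by Proposition \ref{sh=fun}(iii) this last group is the reduced simplicial cohomology of a contractible complex, and vanishes for $i\geq 1$. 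Unwinding everything back into the long exact sequence finishes the proof. The main place that requires real care is this last paragraph: namely, the identification $F^{(y)}\cong\iota_*\underline{A^e_y}$ and the verification that cohomology genuinely commutes with the product $\prod_y F^{(y)}$; both are tractable precisely because $X$ is a finite $T_0$-space and every cohomology group in sight is computed by a finite cochain complex.
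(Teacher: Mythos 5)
Your argument is correct and its skeleton coincides with the paper's: both feed the sequence \eqref{scartier} into the long exact cohomology sequence, use that $s\mathcal{M}^*_X$ is a constant sheaf on the ($s$-cancellative, separated, finite-type) scheme $X$ so that its higher cohomology dies by Lemma \ref{covdim}(i), and thereby reduce everything to the acyclicity of the $s$-flasque quotient $s\mathcal{M}^*_X/\mathcal{O}^*_X$. Where you genuinely differ is in that last step. The paper disposes of it in one line: an $s$-flasque sheaf is in particular flasque, and flasque sheaves are acyclic by Godement. You instead prove the acyclicity directly: Lemma \ref{val} presents the sheaf as generated by a collection $(A^e_y)$, which you split as a finite product of sheaves $\iota_*\underline{A^e_y}$ supported on the closed up-sets $Z=\overline{\{y\}}$; each such $Z$ has $y$ as its minimum, so its nerve is a cone and the constant sheaf on it is acyclic by Proposition \ref{sh=fun}(iii), and exactness of $\iota_*$ for a closed immersion transports this back to $X$. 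Your route is longer but self-contained, and it yields slightly more, namely the explicit computation $H^0(X,F)=\prod_y A^e_y$ and $H^{i}(X,F)=0$ for $i\geq 1$ for every $s$-flasque $F$; the paper's route is shorter but leans on an external citation. The steps you flag as needing care (the identification of $F^{(y)}$ with $\iota_*\underline{A^e_y}$ on sections over down-sets, and the commutation of the finite cochain complex of Proposition \ref{sh=fun}(iii) with finite products of sheaves) do check out on a finite $T_0$-space.
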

\begin{proof} As we said, in the proof of Proposition \ref{5.1},  the sheaf $s{\mathcal M}^*_X$ is constant, provided   $X$ is $s$-cancellative. Thus for separated and $s$-cancellative  monoid schemes one has 
$$H^i(X,{\mathcal O}^*_X)=H^{i-1}(X,s{\mathcal M}^*_X/{\mathcal O}^*_X), \ \ i\geq 2.$$
If additionally $X$ is $s$-smooth, the last groups vanishes, because any $s$-flasque sheaf is flasque and hence has zero cohomology in all positive dimensions \cite{G}.
\end{proof}

As a corollary we obtain that $H^i(X,{\mathcal O}^*_X)=0$, for all $i\geq 2$, provided $X$ is smooth. This finishes the classical analogue and while the proof might have been longer, it is also more general. Now we give an example of a monoid, which is $s$-smooth, but not smooth. 

\begin{Ex} \label{n=2}
Let $M=\gen{a,b,e}/ab=abe, \ \ e^2=e$. Then $M$ is $s$-smooth.
\end{Ex}

\begin{proof} One easily sees that there are $7$ prime ideals:
$$\emptyset, (a), (b),(a,b),(a,e),(b,e),(a,b,e).$$
We now look at the sheaf ${\mathcal O}_X^*$, where $X=Spec(M)$. Since we are for the purposes of this example only interested in groups, we will "switch" over to the category of groups and  write $\gen{a}$ for $\gen{a,a^{-1}}$, to simplify the diagrams a little. Hence $\mathcal{O}^*_X$ and $s\mathcal{M}^*_X/\mathcal{O}^*_X$  looks as follows: 
$$\xymatrix@C=1em{ & \mathcal{O}^*_X & & & & & & & s\mathcal{M}^*_X/\mathcal{O}^*_X & & \\ 
			        & 1\ar[dl]\ar[d]\ar[dr] & & & & & & & \gen{a,b}\ar[dl]\ar[d]\ar[dr] & & \\ 
			        \gen{a}\ar[d] & 1\ar[dl]\ar[dr] & \gen{b}\ar[d] & & & & & \gen{b}\ar[d] & \gen{a,b}\ar[dl]\ar[dr] & \gen{a}\ar[d] \\ 
			        \gen{a}\ar[dr] & & \gen{b}\ar[dl] & & & & & \gen{b}\ar[dr] & & \gen{a}\ar[dl] \\ 
			        & \gen{a,b} & & & & & & & 1 &}$$
\end{proof} 

Since every morphism is the canonical morphism, i.e. $a\mapsto a$, it is straightforward to see that $s\mathcal{M}^*_X/\mathcal{O}^*_X$ is indeed $s$-flasque. It is also clear that $M$ is not smooth.

This particular example is in fact a special case of the following family of monoids, which are also $s$-smooth.

\begin{Ex} Let 
$$M_n=\gen{a_1,\cdots,a_n,e}/a_1\cdots a_n=a_1\cdots a_n\cdotp e, \ \ e^2=e$$ 
and let $X_n$ be $Spec(M_n)$. Then $M_n$ is $s$-smooth for all $n$. Furthermore the associated collection of groups defined on the poset of $Spec(M_n)$ corresponding to the functor $s\mathcal{M}^*_X/\mathcal{O}^*_X$ has $\mathbb{Z}$ on the points of height one and the trivial groups everywhere else.
\end{Ex}

To prove this Example, we first need the following lemma.

\begin{Le}\label{sharp} Let $M$ be a finitely generated, $s$-cancellative monoid. Then 
$$s\mathcal{M}^*_X/\mathcal{O}^*_X=s\mathcal{M}^*_{X/X^*}/\mathcal{O}^*_{X/X^*}$$
where $X/X^*$ denotes the Spectrum of $M/M^*$.
\end{Le}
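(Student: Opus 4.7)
The plan is to construct a natural comparison map $\theta: s\mathcal{M}^*_X/\mathcal{O}^*_X \to s\mathcal{M}^*_{X/X^*}/\mathcal{O}^*_{X/X^*}$ induced by the quotient $\pi: M \twoheadrightarrow \bar M := M/M^*$, and then to verify that $\theta$ is an isomorphism stalk-wise. The first step is to identify the underlying spaces: since every prime ideal of $M$ is automatically disjoint from $M^*$, the assignment $\p \mapsto \bar\p := \pi(\p)$ is a bijection $Spec(M) \cong Spec(\bar M)$ respecting basic opens. Both sheaves therefore live on the same topological space, and $\pi$ induces sheaf maps $\mathcal{O}^*_X \to \mathcal{O}^*_{X/X^*}$ and $s\mathcal{M}^*_X \to s\mathcal{M}^*_{X/X^*}$, which descend to the desired $\theta$.

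Next I would carry out the stalk calculation. Writing $G = K(M)$ for the Grothendieck group of $M$, I claim that for every $\p$ one has
$$\mathcal{O}^*_{X,\p} = (M_\p)^*, \qquad (s\mathcal{M}^*_X)_\p = G,$$
$$\mathcal{O}^*_{X/X^*,\bar\p} = (\bar M_{\bar\p})^* = (M_\p)^*/M^*, \qquad (s\mathcal{M}^*_{X/X^*})_{\bar\p} = G/M^*.$$
The first row uses only that $M$ is $s$-cancellative, which forces $s\mathcal{M}^*_X$ to be the constant sheaf with value $G$, as already noted in the paragraph following (\ref{scartier}). For the second row I would first check that $\bar M$ remains $s$-cancellative: given $\bar a \bar x = \bar a \bar y$, lift to $ax = ayu$ in $M$ for some $u \in M^*$, apply $s$-cancellativity to the pair $(x, yu)$, and project the resulting identity back to $\bar M$. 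Then the natural map $M_\p/M^* \to \bar M_{\bar\p}$ is an isomorphism by a direct computation on fractions; passing to units gives $(M_\p)^*/M^* \cong (\bar M_{\bar\p})^*$ (a unit in the quotient lifts since $xy = u \in M^*$ in $M_\p$ forces $x \in (M_\p)^*$). Finally $K(\bar M) = G/M^*$: the induced map $K(M) \to K(\bar M)$ is surjective since $M \to \bar M$ is, and an element $m/n$ mapping to $1$ satisfies $mc = ncu$ in $M$ for some $c$ and $u \in M^*$, hence equals $u$ in $G$; the injectivity of $M^* \hookrightarrow G$ is a direct consequence of $s$-cancellativity in the style of Proposition \ref{sregular}.

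Combining these, the two quotient sheaves have canonically isomorphic stalks:
$$\bigl(s\mathcal{M}^*_X/\mathcal{O}^*_X\bigr)_\p = G/(M_\p)^* = \frac{G/M^*}{(M_\p)^*/M^*} = \bigl(s\mathcal{M}^*_{X/X^*}/\mathcal{O}^*_{X/X^*}\bigr)_{\bar\p},$$
and $\theta_\p$ is the tautological identification of these two groups, so $\theta$ is an isomorphism of sheaves. The main obstacle is the stalk bookkeeping: because $M$ is only $s$-cancellative and not cancellative, routine manipulations such as cancelling a common factor from an equality in $M$ are not valid, and have to be performed either after passing to the group $K(M)$ or by invoking an $s$-regular witness. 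In particular, the identification $K(\bar M) \cong G/M^*$ hinges on the injectivity of $M^* \to G$, which itself is a nontrivial consequence of $s$-cancellativity.
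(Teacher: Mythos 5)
Your proof is correct, and it is worth pointing out that it does not merely differ from the paper's argument --- the paper in fact contains no proof of this lemma at all: the proof environment that follows the statement of Lemma \ref{sharp} is actually the inductive proof of the preceding Example about the monoids $M_n$ (it even invokes Lemma \ref{sharp} in the middle and ends with ``This proves the example''), so the lemma is left unproven in the text. Your stalkwise argument fills that gap. The key points all check out: primes of $M$ miss $M^*$, so $Spec(M)\cong Spec(M/M^*)$ as posets; $M/M^*$ inherits $s$-cancellativity by lifting $\bar a\bar x=\bar a\bar y$ to $ax=ayu$ and applying the definition to the pair $(x,yu)$; $(M/M^*)_{\bar\p}\cong M_\p/M^*$ because localisation commutes with dividing out a group of units, and units of that quotient lift; and $K(M/M^*)\cong G/M^*$ with kernel computed exactly as you say. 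The one place where you should be slightly more explicit is the final application of the third isomorphism theorem, $G/(M_\p)^*\cong \bigl(G/M^*\bigr)/\bigl((M_\p)^*/M^*\bigr)$: this needs both $M^*\to (M_\p)^*$ and $(M_\p)^*\to G$ to be injective, which is precisely condition (iii) of Theorem \ref{scancel} (with $\q=\emptyset$), so it is available but should be cited. You should also note that since $M$ is finitely generated the relevant spaces are finite posets, so by Proposition \ref{sh=fun} a morphism of sheaves that is an isomorphism on all stalks compatibly with the restriction maps is an isomorphism of sheaves --- this is what turns your stalk computation into the stated identity. Compared with what the paper (implicitly) intended, your route is the natural one and is more informative, since it identifies the quotient sheaf explicitly as $\p\mapsto G/(M_\p)^*$, which is exactly what is needed in the subsequent example.
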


\begin{proof} We will pove this by induction on $n$. First observe that the case $n=2$ is already done in the above example. Hence the only thing left to prove is the induction step. So assume for $M_{n-1}$ and consider $M_n$. We have $Spec(M_n)=$
\begin{alignat*}{2} \{\emptyset, (a_1), &\ \cdotp \ \cdotp \ \cdotp \ , (a_n), (a_1,a_2), \ \cdotp \ \cdotp \ \cdotp &, (a_1, \cdots, a_n), \\ 
(a_1,e), &\cdots, (a_n,e), (a_1,a_2,e), \cdots& (a_1, \cdots, a_n,e)\}
\end{alignat*} 
Thus it has $2^{n+1}-1$ prime ideals and Krull dimension $m=n+1$.  The prime ideals of height $m-1$ being $\{(a_1, \cdots, a_n), (a_1, \cdots, \hat{a_i}, \cdots, a_n,e)\}$ where $\hat{a_i}$ means that $a_i$ is omitted. Hence localising with these will, after applying lemma \ref{sharp}, reduce it to the case $n-1$, except for the prime ideal $(a_1, \cdots, a_n)$, in which case we will get $\mathbb{N}^{n-1}$, a smooth monoid. Hence we know that on $U=Spec(M_n)-\mathsf{max}$, where $\mathsf{max}=(a_1, \cdots, a_n,e)$ is the maximal ideal, $s\mathcal{M}^*_X/\mathcal{O}^*_X$ is $s$-flasque and the collection $(s\mathcal{M}^*_X/\mathcal{O}^*_X)^e$ is as in the statement. Hence the only thing left to show is that the map from the global section to the section on $U$ is an isomorphism. But since every map in $\mathcal{O}^*_X$ is of the form $a_i\mapsto a_i$ or $a_i\mapsto 1$, the only thing we need to check is that the dimensions are equal. Using formula \ref{62iso} and the induction assumption, we see that the section on $U$ is $\mathbb{N}^n$, since $M_n$ has $n$ prime ideals of hight one, being $\{(a_i)\}$. To see that the global section of $s\mathcal{M}^*_X/\mathcal{O}^*_X$ is also $\mathbb{N}^n$ is trivial. This proves the example.
\end{proof}

Indeed an even move general family is $s$-cancellative, though the proof of it will not be given.

\begin{Ex} Let 
$$M=\gen{a_1, \cdots, a_n, e_1, \cdots, e_m}/a_1 \cdots a_n=a_1 \cdots a_n \cdotp e_1^{i_1} \cdots e_n^{i_n} \ \  e_j^{k_j}=e_j^{k_j+i_j}.$$ 
Then $M$ is $s$-cancellative. 
\end{Ex}

\begin{Ex} As an example from an other type of $s$-smooth monoids consider $M=\gen{u,a,b}/u^2=ab=u^3$. 
\end{Ex}

\begin{proof} First observe that $Spec(M)=\{\emptyset, (u,a), (u,b), (u,a,b)\}$. Hence $\mathcal{O}^*_X$ and $s\mathcal{M}^*_X/\mathcal{O}^*_X$  looks as follows (in the category of abelian groups): 
$$\xymatrix@C=1em{ & 1\ar[dl]\ar[dr] & & & & & \gen{a,b}/ab=1\ar[dl]\ar[dr] & \\ 
	          \gen{a}\ar[dr] & & \gen{b}\ar[dl] & & & 1\ar[dr] & & 1\ar[dl] \\ 
	           & \gen{a,b}/ab=1 & & & & & 1 & }$$
\end{proof}

 While these examples are distinct from each other, neither are cancellative. Indeed I have a suspicion that if $M$ is cancellative and $\mathcal{M}_X^*/\mathcal{O}_X^*$ is flasque, then $M/M^*\cong \mathbb{N}^r$.

\section{Comparison between vector bundles over a monoid scheme and its realization}\label{sec7} Fix  a commutative ring $k$. For a monoid scheme $X$ there is a functorial way to assign a $k$-scheme $X_k$ \cite[Section 5]{cort}, or \cite{deitmar}. If $X$ is an affine monoid scheme associated to a monoid $M$, then $X_k$ is the affine scheme associated to the $k$-algebra $k[M]$. If $S$ is a free $M$-set, then $k[S]$ is a free $k[M]$-module. After sheafification one obtains a functor 
${\bf Vect}_n(X)\to {\bf Vect}_n(X_k)$. This induces a homomorphism ${\sf Pic}(X)\to {\sf Pic}(X_k)$ or more generally $Vect_n(X)\to Vect_n(X_k)$, $n>0$. 

For example, if $k$ is a field, $X={\mathbb P}^1$, then by a the classical result of Grothendieck  \cite{vb_p1} any vector bundle over ${\mathbb P}^1_k$ is  a direct sum of line bundles.  Thus by  Theorem \ref{dec} and  a computation of ${\sf Pic}({\mathbb P}^1)$ made in \cite{deitmar}, it follows that 
$$ Vect_n({\mathbb P}^1)\to  Vect_n({\mathbb P}^1_k)$$
is a bijection for all $n\geq 1$.

Let $X$ be a separated monoid scheme. To analyse the homomorphism ${\sf Pic}(X)\to {\sf Pic}(X_k)$ we will use the low-dimensional exact cohomological sequence associated to a spectral sequence of a cover  \cite[Section II.5.4]{G}. In our case it takes the following form. Let ${\mathcal U}=(U_i\hookrightarrow X)_{i\in I}$ be an open cover of a separated monoid scheme $X$, using affine monoid schemes $U_i=Spec(M_i)$, where $M_i$  are monoids. By our assumption on $X$, for any $i,j$ we have $U_{ij}=U_i\cap U_j =Spec(M_{ij})$ for some monoid $M_{ij}$. 
Then the cochain complex of the covering ${\mathcal U}_k$ of $X_k$ and the sheaf of invertible elements of $k[{\mathcal O_X}]={\mathcal O}_{X_k}$ looks as follows:
$$\prod_{i\in I}(k[M_i])^*\to \prod_{i,j}(k[M_{ij}])^*\to \cdots .$$
The $p$-th dimensional cohomology of this cochain complex is denoted by $E^{p0}_2$. We also set
$$E^{01}_2={\sf Ker}\left(\prod_{i\in I} {\sf Pic}(k[M_i])\to \prod_{i,j} {\sf Pic}(k[M_{ij}])\right).$$
Then we have an exact sequence
\begin{equation}\label{low} 0\to E^{10}_2\to {\sf Pic}(X_k)\to E^{01}_2\to E^{20}_2.\end{equation}

 Recall that a monoid is \emph{torsion-free} if and only if $x^n=y^n$ for some  $n>0$ implies $x=y$. We will say that a monoid scheme $X$ is called \emph{torsion-free} provided for any affine open monoid subscheme $Spec(M)$, the monoid $M$ is torsion-free. One easily sees that $X$ is torsion-free if and only if ${\mathcal O}_{X,x}$ is torsion-free for any $x\in X$.
\begin{Pro}\label{51} Assume $k$ is an integral domain and $X$ is a torsion-free
cancellative monoid scheme. Then $E^{10}_2={\sf Pic}(X)$ and hence 
the natural map $${\sf Pic}(X)\to {\sf Pic}(X_k)$$ is a monomorphism.  
\end{Pro}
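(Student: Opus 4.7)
The plan is to invoke the exact sequence \eqref{low} and to identify $E^{10}_2$ with ${\sf Pic}(X)$. The key ingredient is the classical description of units in a monoid ring: if $N$ is a torsion-free cancellative monoid and $k$ is an integral domain, then $(k[N])^* = k^*\times N^*$. This follows by embedding $N$ into its Grothendieck group $G$, which is torsion-free abelian, so $k[G]$ is an integral domain whose units are $k^*\cdot G$; intersecting with $k[N]$ forces a unit of $k[N]$ together with its inverse to be supported on invertible elements of $N$, i.e.\ on $N^*$.

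Under the hypotheses on $X$, every stalk ${\mathcal O}_{X,x}$, and in particular every $M_i$ and every $M_{i_1\cdots i_p}$ appearing in the affine cover ${\mathcal U}$, is torsion-free cancellative. Hence the above lemma applies uniformly to give $(k[M_{i_1\cdots i_p}])^* \cong k^*\times M_{i_1\cdots i_p}^*$ for every multi-index, and these isomorphisms are compatible with restriction. Consequently the \v Cech cochain complex
$$\prod_i (k[M_i])^* \longrightarrow \prod_{i,j}(k[M_{ij}])^* \longrightarrow \cdots$$
splits as the direct product of the \v Cech complex of the cover ${\mathcal U}$ with coefficients in the constant sheaf $k^*$ and the \v Cech complex with coefficients in ${\mathcal O}_X^*$. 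Taking $H^1$ yields
$$E^{10}_2 \;\cong\; \check H^1({\mathcal U},k^*)\,\oplus\,\check H^1({\mathcal U},{\mathcal O}_X^*).$$

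By Proposition \ref{va}(ii) the two \v Cech groups compute the corresponding sheaf cohomology groups, and by Proposition \ref{3} the second summand is exactly ${\sf Pic}(X)$. For the first summand, Lemma \ref{covdim}(i), applied on each connected component, gives $H^1(X,k^*)=0$ because $k^*$ is a constant sheaf. Therefore $E^{10}_2={\sf Pic}(X)$. Moreover the identification is compatible with the natural map ${\sf Pic}(X)\to {\sf Pic}(X_k)$, since a representing \v Cech cocycle $(f_{ij})\in {\mathcal O}_X^*(U_{ij})$ is sent to the very same cocycle $(f_{ij})\in (k[{\mathcal O}_X])^*(U_{ij})$ under the inclusion ${\mathcal O}_X^*\hookrightarrow (k[{\mathcal O}_X])^*$. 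Feeding $E^{10}_2={\sf Pic}(X)$ into the exact sequence \eqref{low} then yields the monomorphism ${\sf Pic}(X)\hookrightarrow {\sf Pic}(X_k)$.

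The main obstacle is really just the units computation $(k[N])^*=k^*\times N^*$; once it is available, everything else is formal bookkeeping on the \v Cech complex plus the vanishing $H^1(X,k^*)=0$ that was already established in Section \ref{sec1}.
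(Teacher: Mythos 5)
Your proposal is correct and follows essentially the same route as the paper: the splitting $(k[M])^*\cong k^*\times M^*$ for torsion-free cancellative $M$ over an integral domain $k$ (which the paper cites from Gilmer, Theorem 11.1, rather than re-deriving), the resulting decomposition of the \v{C}ech complex into the ${\mathcal O}_X^*$-part and the constant-sheaf $k^*$-part, and the vanishing of the latter in positive degrees. Your extra remarks on the compatibility of cocycles and the componentwise application of the constant-sheaf vanishing are fine but do not change the argument.
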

\begin{proof} By Theorem 11.1 in \cite{gil} if $k$ is an integral domain and $M$ is a torsion-free  cancellative monoid, then one has $(k[M])^*=k^*\times M^*$. Thus in this case the cochain complex computing $E^{*0}_2$-terms is a direct sum of two subcomplexes, corresponding to the ring and monoid factors. Hence 
$$E^{*0}_2\cong H^*({\mathcal U}, {\mathcal O_X}^*) \oplus H^*({\mathcal U}, k^*),$$ 
where $k^*$ is considered as a constant sheaf on $X$. The homology of the second summand vanishes in positive dimensions and hence the result follows.
\end{proof}

A finitely generated cancellative monoid $M$ is called \emph{seminormal} if for any $x\in M^{gr}$ with $x^2,x^3\in M$, it follows that $x\in M$. Here $M^{gr}$ is the group of fractions of $M$. A monoid scheme $X$ is \emph{seminormal} provided for any  
affine open monoid subscheme $Spec(M)$, the monoid $M$ is seminormal.

\begin{Pro}\label{52}  Assume $k$ is  a PID and $X$ is a seminormal monoid scheme. Then the natural map
$${\sf Pic}(X)\to {\sf Pic}(X_k)$$
is an epimorphism. 
\end{Pro}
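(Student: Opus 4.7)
The plan is to apply the low-dimensional exact sequence (\ref{low}) to an affine open cover $\mathcal{U}=(U_i=Spec(M_i))_{i\in I}$ of $X$ and to analyse the two terms $E^{01}_2$ and $E^{10}_2$ separately. The goal is to show that $E^{01}_2=0$ and that $E^{10}_2$ is naturally isomorphic to ${\sf Pic}(X)$; exactness of (\ref{low}) will then force ${\sf Pic}(X_k)\cong {\sf Pic}(X)$, in particular yielding the desired surjectivity.

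For the first point, I would invoke the classical monoid analogue of the seminormal Picard-vanishing theorem (Anderson, Gubeladze): for a PID $k$ and a seminormal commutative monoid $M$ one has ${\sf Pic}(k[M])={\sf Pic}(k)=0$. Since $X$ is separated, every double intersection $U_i\cap U_j$ is again affine, say $Spec(M_{ij})$, and $M_{ij}$ is a localization of $M_i$. Seminormality is preserved under localization of finitely generated cancellative monoids, so every $M_{ij}$ is seminormal as well. Consequently ${\sf Pic}(k[M_i])=0={\sf Pic}(k[M_{ij}])$ for all $i,j$, which immediately gives $E^{01}_2=0$.

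For the second point, I would follow the pattern used in the proof of Proposition \ref{51}: the units theorem $(k[M])^*=k^*\times M^*$, valid for any integral domain $k$ and any torsion-free cancellative monoid $M$ (we use the torsion-freeness already stipulated in the introduction), splits the Cech cochain complex computing the $E^{*0}_2$-terms into a ring part and a monoid part, producing
$$E^{10}_2 \cong H^1(\mathcal{U},{\mathcal O}_X^*)\oplus H^1(\mathcal{U},k^*).$$
By Proposition \ref{va}(ii) the first summand equals $H^1(X,{\mathcal O}_X^*)={\sf Pic}(X)$, while by Lemma \ref{covdim}(i) the constant sheaf cohomology $H^1(X,k^*)$ vanishes (a connected separated monoid scheme of finite type has contractible nerve). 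Hence $E^{10}_2\cong {\sf Pic}(X)$, and unwinding the identifications shows that the composite ${\sf Pic}(X)\cong E^{10}_2\hookrightarrow {\sf Pic}(X_k)$ is precisely the canonical map. Combined with the previous paragraph this forces it to be an isomorphism, so in particular an epimorphism.

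The main obstacle is the deep vanishing statement ${\sf Pic}(k[M])=0$ for seminormal $M$ over a PID, which is the monoid version of a classical and non-trivial result; I would cite it rather than reprove it. The ancillary technicalities -- stability of seminormality under localization so that the $M_{ij}$ also satisfy the hypothesis, and applicability of the units theorem to seminormal torsion-free cancellative monoids -- are comparatively routine verifications.
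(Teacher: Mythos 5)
Your proposal is correct and follows essentially the same route as the paper: the paper's proof likewise invokes the seminormality theorem (Bruns--Gubeladze, Theorem 8.4 in \cite{soso}) to kill $E^{01}_2$ and then concludes from the exact sequence (\ref{low}) together with Proposition \ref{51}, whose units-theorem splitting you have simply written out in full. You were also right to flag that the identification $E^{10}_2\cong{\sf Pic}(X)$ quietly uses the torsion-free hypothesis from Proposition \ref{51}, which the statement of Proposition \ref{52} omits but which is restored in Corollary \ref{53}.
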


\begin{proof} It is well-known, that if $M$ is seminormal, then ${\sf Pic}(k[M])=0$ (see \cite[Theorem 8.4]{soso}. It follows that $E^{01}_2=0$.  Hence the result follows from the exact sequence (\ref{low}) and Proposition (\ref{51}).
\end{proof}

\begin{Co}\label{53} Assume $k$ is  a PID and $X$ is a seminormal and torsion-free monoid scheme. Then the natural map
$${\sf Pic}(X)\to {\sf Pic}(X_k)$$
is an isomorphism. 

\end{Co}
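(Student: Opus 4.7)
The plan is simply to combine the two preceding propositions. Since the hypothesis of Corollary \ref{53} is the conjunction of the hypotheses of Propositions \ref{51} and \ref{52}, the only thing to check is that the two propositions can indeed be applied simultaneously.

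For injectivity, I would invoke Proposition \ref{51}: it requires $k$ to be an integral domain, which holds since every PID is an integral domain, and it requires $X$ to be cancellative and torsion-free. Torsion-freeness is part of the hypothesis. Cancellativity comes for free from seminormality, because by definition a seminormal monoid is a finitely generated cancellative monoid satisfying the additional root-closure condition; hence any seminormal monoid scheme is in particular cancellative. So Proposition \ref{51} applies and yields that ${\sf Pic}(X)\to {\sf Pic}(X_k)$ is a monomorphism.

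For surjectivity, I would invoke Proposition \ref{52} directly: its hypotheses, namely that $k$ is a PID and $X$ is seminormal, are exactly part of the standing hypotheses of the corollary. This gives that ${\sf Pic}(X)\to {\sf Pic}(X_k)$ is an epimorphism.

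Combining the two gives the desired isomorphism. There is no real obstacle here; the only subtlety worth spelling out is the observation that seminormality entails cancellativity, so that the cancellativity hypothesis of Proposition \ref{51} is automatically satisfied and need not appear separately in the statement of the corollary.
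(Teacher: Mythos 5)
Your proposal is correct and is exactly the argument the paper intends (the corollary is stated without proof, immediately after Propositions \ref{51} and \ref{52}, precisely because it is their conjunction). Your observation that seminormality entails cancellativity by the paper's own definition, so that Proposition \ref{51} applies, is the only point needing verification, and you handle it correctly.
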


For $X={\mathbb P^n}$ this result was first proven by direct computation in \cite{deitmar} (if $n=1$)  and \cite{bv} (if $n\geq 2$).

\end{document}